\documentclass[reqno,12pt]{amsart}
\usepackage{amsmath,amsthm,enumerate,graphics,pstricks,amsfonts,latexsym,amsopn,verbatim,amscd,amssymb,url}
\usepackage{color}
\usepackage{psfrag}
\usepackage[all]{xy}
\usepackage{tikz}

\textwidth=435pt
\oddsidemargin=17pt
\evensidemargin=17pt

\theoremstyle{plain}
\newtheorem{thm}{Theorem}[section]

\newtheorem{lem}[thm]{Lemma}
\newtheorem{cor}[thm]{Corollary}
\newtheorem{prop}[thm]{Proposition}
\theoremstyle{definition}
\newtheorem{conj}{Conjecture}[section]
 
\newtheorem{prob}[conj]{Problem}
\newtheorem{rem}[conj]{Remark}

\DeclareMathOperator{\Spec}{Spec}
\DeclareMathOperator{\L-Spec}{L-spec}
\DeclareMathOperator{\Q-Spec}{Q-spec}

\newcommand{\eng}{\mathop{\mathcal{E}}}
\newcommand{\deng}{\mathop{\overrightarrow{\mathcal{E}}}}
\newcommand{\coengall}{\mathop{\mathcal{E}_c}}
\newcommand{\coeng}{\mathop{\mathcal{E}_c^-}}

\begin{document}
\title[Engel and co-Engel graphs]{Engel and Co-Engel graphs of finite groups}

\author[P. J. Cameron, R. Chakraborty,  R. K. Nath and D. Nongsiang]{Peter J. Cameron*, Rishabh Chakraborty, Rajat Kanti Nath and Deiborlang Nongsiang}

\address{P. J. Cameron, School of Mathematics and Statistics, University of St Andrews, North Haugh, St Andrews, Fife, KY16 9SS, UK.}

\email{pjc20@st-andrews.ac.uk}

\address{R. Chakraborty, Department of Mathematical Sciences, Tezpur University, Napaam-784028, Sonitpur, Assam, India. \,\,
Department of Mathematics, Assam Engineering Institute, Chandmari, Guwahati-781003, Assam, India.}

\email{rchakraborty2101@gmail.com}

\address{R. K. Nath, Department of Mathematical Sciences, Tezpur University, Napaam-784028, Sonitpur, Assam, India.}

\email{rajatkantinath@yahoo.com}

\address{D. Nongsiang, Department of Mathematics, North-Eastern Hill University,
	Shillong-793022, Meghalaya, India.}

\email{ndeiborlang@yahoo.in} 

\begin{abstract}
Let $G$ be a group. Associate a directed graph $\deng(G)$ (called the Engel digraph of $G$) with $G$ whose vertex set is $G$, with an arc $(x,y)$ if  
$[y, {}_k x]=1$ for some positive integer $k$,
where $[y,{}_kx]$ is the iterated commutator $[y,x,x,\ldots,x]$, with $k$
terms $x$ in the expression. From this we define the Engel graph $\eng(G)$ by
ignoring directions; the co-Engel graph $\coengall(G)$ is its complement.

The co-Engel graph, under the name ``Engel graph'', was introduced by Abdollahi. However, the name we use is more natural. We begin with some
general results about the Engel digraph and graph,  before turning our attention
to the co-Engel graph. Among other things, we show that (unlike what happens
for the power graph) the undirected Engel graph does not determine the directed
version up to isomorphism, though counterexamples seem to be fairly rare: there
are just two orders less than $100$ for which this happens. We also prove a
universality theorem: every finite digraph is an induced sub-digraph of the
Engel digraph of a finite group. Indeed, the digraph can carry numbers on the
arcs representing the depths of the Engel commutators required; a simple
necessary condition on these numbers is shown to be sufficient.

The isolated vertices of $\coengall(G)$ form the set $F(G)$ be the set of all left
Engel elements of $G$. In a finite group $G$, $F(G)$ is the Fitting subgroup
of $G$ (a result of Abdollahi).
In this paper, we realize the induced subgraph of  co-Engel graphs of certain finite non-Engel groups $G$ induced by $G \setminus F(G)$ (A group $G$ is an
Engel group if for all $x,y\in G$ there exists a positive integer $k$ such that
$[x,{}_ky]=1$. Zorn showed that finite Engel groups are nilpotent.) 
We write $\coeng(G)$  to denote the  subgraph of  $\coengall(G)$ induced by
$G \setminus F(G)$. We also compute genus, various spectra, energies and
Zagreb indices of $\coeng(G)$
for those groups. As a consequence, we determine (up to isomorphism) all finite non-Engel group $G$ such that the clique number $\omega(\coeng(G))$ is at most
$4$ and $\coeng(G)$ is toroidal or projective. Further, we show that
$\coeng(G)$ is ALQ-integral and satisfies the E-LE conjecture and the Hansen-Vuki{\v{c}}evi{\'c} conjecture for the groups considered in this paper. 
\end{abstract}

\thanks{*Corresponding author}
\subjclass[2010]{Primary 20D60; Secondary 05C25}
\keywords{Engel graph; Finite group}

\maketitle

\section{Introduction}

The term ``graphs defined on groups'' refers to graphs whose vertex set is the
group $G$ (or a suitable subset) with the adjacency of elements $x$ and $y$
defined in terms of group-theoretic properties of these elements. The first
such graph was the \emph{commuting graph} of a group, with $x$ and $y$ joined
if $xy=yx$. This was used by Brauer and Fowler in a 1955 paper~\cite{BF} 
containing a foundational result for the study of finite simple groups (they
used the ideas of graph theory but not the name). The graphs defined here can
be regarded as a generalisation of the commuting graph. In the last few decades,
many more such graphs have been defined, including the power graph and
enhanced power graph, nilpotency graph, generating graph, independence graph,
and rank graph. Our topic here is the \emph{Engel graph}.

In a finite group $G$, the \emph{commutator} of two elements $x,y$ is given by
\[[x,y]=x^{-1}y^{-1}xy,\]
and for a positive integer $k$, the $k$th \emph{Engel commutator} is defined
inductively by
\[[x,{}_1y]=[x,y],\qquad[x,{}_ky]=[[x,{}_{k-1}y],y].\]
For $k\ge1$, we say that $(x,y)$ satisfies the $k$th Engel condition if $[x,{}_ky]=1$.

Abdollahi \cite{aa} defined the ``Engel graph'' of a group $G$, 
with $x,y$ joined if they don't satisfy the Engel condition in either order,
and the \emph{left Engel elements} (isolated in this graph) deleted. He showed
that for infinite soluble non-Engel groups, this graph has no infinite clique
if and only if there is a finite bound on its clique number (this is the
analogue of a similar result by Neumann~\cite{neumann} for the non-commuting
graph, answering a question of Erd\H{o}s, and depends on a result of Longobardi
and Maj~\cite{LM}). He also showed that a finite group is soluble if the graph
has clique number at most $15$. This is best possible: the clique number of the co-Engel graph of $A_5$ is 16 (a maximum clique consists of elements of orders 3 and 5, one from each cyclic subgroup).  Moreover, the index of the Fitting subgroup is
bounded by a function of the clique number. Finally, he studied the
connectedness of the graph.

The study languished for some time until it was revived in the present decade.
Recent authors have changed the definition so that Abdollahi's graph is the
complement of the Engel graph as now defined. (Abdollahi gave the first author
of this paper his blessing for this change of usage in \cite{Cameron2022}.)
Also, unlike most graphs defined on groups (the enhanced power graph, commuting
graph, nilpotent graph, generating graph, independence graph, and others), but
similar to the situation with the power graph, the Engel graph arises naturally
as a directed graph, with an arc $x\to y$ if $[x,{}_ky]=1$ for some integer $k$.
The undirected version is obtained by ignoring directions (and regarding two
oppositely directed arcs between the same pair of vertices as a single
undirected edge).

Detomi, Lucchini and Nemmi~\cite{DLN} showed that the Engel digraph of a finite
group $G$ is weakly connected (with undirected diameter at most~$10$), and is
strongly corrected if $G/Z_\infty(G)$ is neither almost simple nor a Frobenius
group. Moreover, if $G$ is soluble and $G/Z_\infty(G)$ is not Frobenius, the
diameter is at most $4$. Dalla Volta, Mastrogiacomo and Spiga~\cite{DVMS}
consider the case where $G$ is almost simple and show that, in most cases,
strong connectedness holds here too. Finally, the first
author~\cite{Cameron2022} used the Engel graph to show that the set of
dominating vertices in the nilpotency graph of a finite group $G$ is the
hypercenter $Z_\infty(G)$.

%In this paper, we return to the study of Abdollahi's graph, which we re-name as the co-Engel graph. 
%We compute various properties of this graph
%including genus, spectra, energies and Zagreb indices for a selection of
%groups, showing that the graph is ALQ-integral and satisfies the E-LE conjecture and the Hansen--Vuki\v{c}evi\'c conjecture. 

In this paper, we return to the study of Abdollahi's graph, which we re-name as the co-Engel graph. In Section 2, we recall the necessary definitions and results.
%After a section giving definitions and assumed results, 
In Section 3, we show that the
conditions on a finite group that its nilpotency graph is complete, its
Engel graph is complete, and these two graphs are equal are all equivalent
to the statement that the group is nilpotent. We  show that the Engel
graph of a group with hypercenter $Z_\infty(G)$ is the lexicographic product
of a complete graph of order $|Z_\infty(G)|$ by the Engel graph of
$G/Z_\infty(G)$. We also show that  the undirected Engel graph does not determine the directed
version up to isomorphism, though counterexamples are not so common. In Section 4, observe that a group is nilpotent if and only if its Engel graph and strong Engel graph are equal, and is two-sided Engel tame if and only if its strong Engel graph is
equal to its nilpotency graph.  In Section 5, we show that every finite digraph is an induced sub-digraph of the Engel digraph of a finite group. Indeed, the digraph can carry numbers on the
arcs representing the depths of the Engel commutators required; a simple
necessary condition on these numbers is shown to be sufficient. In Section 6,  we realize the induced subgraph $\coeng(G)$  of  co-Engel graphs of certain finite non-Engel groups $G$ induced by $G \setminus F(G)$, where $F(G)$ is the Fitting subgroup of $G$. In Section 7, we compute genus of $\coeng(G)$
for those groups. As a consequence, we determine (up to isomorphism) all finite non-Engel group $G$ such that the clique number $\omega(\coeng(G))$ is at most
$4$ and $\coeng(G)$ is toroidal or projective. In Sections 8--9, we compute various  spectra, energies and Zagreb indices of $\coeng(G)$ for a selection of
groups, showing that the graph is ALQ-integral and satisfies the E-LE
conjecture and the Hansen-Vuki\v{c}evi\'c conjecture. In Section 10, we conclude the paper with some problem for future research.

The co-Engel graph is null precisely for nilpotent groups,
so we exclude these from our discussion and remove the isolated vertices of
the co-Engel graph in order to study its properties.

\section{Engel elements}

The \emph{commutator} of two elements $x,y$ of a group $G$ is 
$[x,y]=x^{-1}y^{-1}xy$. It is equal to the identity if the elements commute.

For a positive integer $k$, the $k$th \emph{Engel commutator} of $x$ and $y$ is
defined inductively by
\[[x,{}_1y]=[x,y],\qquad[x,{}_{k+1}y]=[[x,{}_ky],y]\hbox{ for $k\ge1$}.\]
The group $G$ is a \emph{$k$-Engel group}, or satisfies the \emph{$k$-Engel
identity}, if $[x,{}_ky]=1$ for all $x,y\in G$. So $G$ satisfies the $1$-Engel
identity if and only if it is abelian.

The group $G$ is \emph{nilpotent of class~$k$} if every commutator of length
$k+1$ is equal to the identity. So a nilpotent group of class $k$ is a
$k$-Engel group. The converse is false; but Zorn~\cite{zorn} showed that a
finite $k$-Engel group is nilpotent (though its class may be larger than $k$).
So a finite group $G$ is an Engel group (that is, $k$-Engel for some $k$) if
and only if it is nilpotent.

We need one important result, due to Schmidt~\cite{schmidt} (an accessible
account can be found in \cite{ber}). Schmidt gave a complete description of
minimal non-nilpotent finite groups (that is, non-nilpotent groups all of whose
proper subgroups are nilpotent). We need this later; now, we need only the following consequences:

\begin{thm}
A minimal non-nilpotent finite group is soluble.
\end{thm}

\begin{cor}
Any non-nilpotent group contains a soluble non-nilpotent subgroup.
\label{c:nn}
\end{cor}

We also need the following results. The \emph{Fitting subgroup} $F(G)$ of the
finite group $G$ is the (unique) maximal normal nilpotent subgroup of $G$;
the \emph{center} of $G$ is the subgroup
$Z(G)=\{x\in G:(\forall y\in G)([x,y]=1)\}$, and the \emph{hypercenter} of $G$
is the subgroup $Z_\infty(G)$ defined as follows: put $Z_1(G)=Z(G)$; then for
$k\ge1$ let $Z_{k+1}(G)$ be the subgroup defined by
\[Z_{k+1}(G)/Z_k(G)=Z(G/Z_k(G));\]
and then $Z_\infty(G)=\bigcup_{k\in\mathbb{N}}Z_k(G)$. Noting that for a
finite group the ascending series terminates, we see that $Z_\infty(G)=Z_k(G)$
for some $k$. So the hypercenter is nilpotent and normal, and thus is
contained in the Fitting subgroup.

An element $x\in G$ is a \emph{left Engel element} if, for all
$y\in G$, there exists $k$ such that $[y,{}_kx]=1$; $y$ is a \emph{right Engel
element} if, for all $x\in G$, there exists $k$ such that $[y,{}_kx]=1$.

\begin{thm}
Let $G$ be a finite group.
\begin{enumerate}
\item The set of elements $x\in G$ such that $\langle x,y\rangle$ is nilpotent
for all $y\in G$ is the hypercenter of $G$.
\item The set of left Engel elements is the Fitting subgroup of $G$.
\item The set of right Engel elements is the hypercenter of $G$.
\item The set of elements $x\in G$ such that, for all $y$ in $G$, there
exists $k$ such that either $[x,{}_ky]=1$ or $[y,{}_kx]=1$ is the Fitting
subgroup of $G$.
\end{enumerate}
\label{t:engelelts}
\end{thm}

The second and third parts of this theorem are due to Baer~\cite{baer}, and
the last to Abdollahi~\cite{aa}. For the reader's convenience, we
sketch the proof of part (b). Let $F(G)$ be the Fitting subgroup of $G$.
If $x\in F(G)$ and $y\in G$, then $[y,x]\in F(G)$ since $F(G)$ is normal, and
then $[[y,x],{}_kx]=1$ for some $k$, since $F(G)$ is nilpotent. The first 
part is \cite[Theorem 11.4]{Cameron2022}.

\section{Engel digraph and graph}

Let $G$ be a finite group.
\begin{enumerate}
\item The \emph{directed Engel graph}, or \emph{Engel digraph}, of $G$ is
the directed graph which has an arc $x\to y$ if there exists $k$ such that
$[y,{}_kx]=1$. It is denoted $\deng(G)$.
\item The (undirected) \emph{Engel graph} is obtained from the Engel digraph
by ignoring directions; in other words, $x$ and $y$ are joined if there exists
$k$ such that either $[x,{}_ky]=1$ or $[y,{}_kx]=1$. It is denoted
$\eng(G)$.
\item The \emph{co-Engel graph} is the complement of the Engel graph; that is,
$x$ and $y$ are joined if $[x,{}_ky]\ne1$ and $[y,{}_kx]\ne$ for all $k$.
We denote it by $\coengall(G)$, and the result of deleting the isolated
vertices by $\coeng(G)$.
\end{enumerate}

The following is immediate from Theorem~\ref{t:engelelts}:

\begin{thm} Let $G$ be a finite group.
\begin{enumerate}
\item In the Engel digraph, the set of sources (vertices with arcs to all
others) is the Fitting subgroup, while the set of sinks (vertices with arcs
from all others) is the hypercenter.
\item The set of dominating vertices in the Engel graph is the Fitting
subgroup.
\item The set of isolated vertices in the co-Engel graph is the Fitting
subgroup.
\end{enumerate}
\end{thm}

When we consider the co-Engel graph in detail, we will delete the isolated
vertices.

If $G$ is nilpotent, then the Engel digraph is the complete digraph (with arcs
in both directions between all pairs of vertices), and so the Engel graph
is the complete graph. We will extend this observation.

\begin{thm}
Let $G$ be a non-nilpotent finite group. Then there are elements $x,y\in G$
such that there is only a single arc between $x$ and $y$ in the Engel digraph.
\label{t:single}
\end{thm}

\begin{proof}
By Corollary~\ref{c:nn}, we may assume that $G$ is soluble. (A single arc in
the Engel digraph of a subgroup of $G$ is a single arc in the Engel digraph 
of $G$.)

Let $F$ be the Fitting subgroup of $G$. Then $F$ is the set of left Engel
elements, so we have an arc $x\to y$ for all $x\in F$, $y\in G$ with $x\ne y$.
If there are no single edges, then we also have an arc $y\to x$ for all such
$x,y$.

If $F=G$, we are done; so suppose not. Let $H$ be a subgroup of $G$ such that
$H/F$ is a minimal normal subgroup of $G/F$. Then $H/F$ is an elementary
abelian $p$-group, for some prime $p$. We will show that $H$ is nilpotent,
contradicting the fact that $F$ is the maximal normal nilpotent subgroup of $G$.

Take an element $x\in H\setminus F$. We can assume that the order of $x$ is
a power of $p$. Let $Q$ be a non-trivial Sylow $q$-subgroup of $F$, where
$q\ne p$. If $x$ acts non-trivially on $Q$, then it acts non-trivially on
$Q/\Phi(Q)$, by the Burnside basis theorem~\cite[Chapter 10]{hall}, where
$\Phi(Q)$ is the Frattini subgroup of $Q$. This quotient is an elementary
abelian $q$-group, and is completely reducible as an $\langle x\rangle$-module,
since $q\ne p$.

Choose an element $y\in Q$ such that $y\Phi(Q)$ lies in a submodule
$M$ of $Q/\Phi(Q)$ on which $x$ acts non-trivially. Then $x^{-1}yx\Phi(Q)\in M$
and $x^{-1}yx\Phi(Q)\ne y\Phi(Q)$, so $[y,x]\Phi(Q)$ is a non-zero element of
$M$. Now the same argument applies, with induction, to show that
$[y,{}_nx]\Phi(Q)\ne\Phi(Q)$, and hence $[y,{}_nx]\ne1$, for all $n$. But this
contradicts the fact that $x\to y$ in the directed Engel graph.

This shows that a Sylow $p$-subgroup $P$ of $H$ acts trivially on each Sylow
$q$-subgroup $Q$ of $F$ (and hence of $H$) for $q\ne p$ (for elements of
$P\cap F$ commute with $Q$ since $F$ is nilpotent, and the remaining elements
by the argument we have just given). Since $F$ is nilpotent,
its distinct Sylow subgroups commute with each other; so all the distinct
Sylow subgroups of $H$ commute with each other, and $H$ is nilpotent.

This contradiction completes the proof.
\end{proof}

For the next result, we recall that the \emph{nilpotency graph} of the finite
group $G$ has an edge $\{x,y\}$ if and only if $\langle x,y\rangle$ is
nilpotent.

\begin{thm}\label{ad}
Let $G$ be a finite soluble group. Then the following are equivalent:
\begin{enumerate}
\item $G$ is nilpotent;
\item the nilpotency graph of $G$ is complete;
\item the Engel graph of $G$ is complete;
\item the nilpotency and Engel graphs of $G$ are equal.
\end{enumerate}
\end{thm}

This theorem answers part of Question~24 in \cite{Cameron2022}.

\begin{proof}
If $G$ is nilpotent, then its $2$-generator subgroups are nilpotent and hence
Engel, so (b)--(d) all hold. We prove the converses.

Suppose that $G$ is not nilpotent. By Theorem~\ref{t:single}, the Engel digraph
has two vertices $x$ and $y$ joined by a single arc. Then one of
$[x,{}_ky]=1$ and $[y,{}_kx]=1$ fails for all $k$. Thus, $\langle x,y\rangle$ is
not Engel, and hence not nilpotent, so the nilpotency graph is not complete.
Thus (b) implies (a). Moreover, $\{x,y\}$ is an edge in the Engel graph but
not in the nilpotency graph; so also (d) implies (a).

Using Schmidt's classification of
the minimal non-nilpotent groups, we see that each of them involves a cyclic
$q$-group $Q$ acting fixed-point-freely on an abelian $p$-group $P$. Let $x$
be a generator of $Q$, and $y$ a conjugate of $x$ by an element of $P$.
Then $[x,y]$ is a non-identity element of $P$, and so $[x,{}_ky]$ is a
non-identity element of $P$ for all $k$. Similarly with $x$ and $y$
interchanged. So $x$ and $y$ are not joined in the Engel graph of $G$. Thus
(c) implies (a).
\end{proof}

Next we give a structure theorem for the Engel graph which shows that, for many
purposes, we can reduce to the case of groups with trivial center. We note
that, for any finite group $G$, the quotient $G/Z_\infty(G)$ has trivial center.

The \emph{lexicographic product} of graphs $\Gamma_1$ and
$\Gamma_2$ is the graph whose vertex set is $v(\Gamma_1)\times v(\Gamma_2)$,
which we regard as a disjoint union of copies $C_x$ of $v(\Gamma_1)$ indexed by
$\Gamma_2$. We place a copy of $\Gamma_1$ on each of these sets. For $x\ne y$
if there is an edge (resp. arc) from $x$ to $y$ in $\Gamma_2$, then we put
all edges (resp. arcs) from $C_x$ to $C_y$; otherwise we put no edges.

\begin{thm}
Let $G$ be a finite group.
\begin{enumerate}
\item The Engel digraph of $G$ is isomorphic to the lexicographic product of
a complete directed graph of order $|Z_\infty(G)|$ by the Engel digraph of
$G/Z_\infty(G)$.
\item The Engel graph of $G$ is isomorphic to the lexicographic product of
a complete graph of order $|Z_\infty(G)|$ by the Engel graph of
$G/Z_\infty(G)$.
\item The co-Engel graph of $G$ is isomorphic to the lexicographic product of
a null graph of order $|Z_\infty(G)|$ by the co-Engel graph of $G/Z_\infty(G)$.
\end{enumerate}
\end{thm}

\begin{proof}
Parts (b) and (c) follow immediately from (a), so it suffices to prove (a).
We prove this by induction on the length of the upper central series 
$(Z_n(G))$ of $G$. We have to show that, if some pair of vertices in
$Z_\infty(G)$ are joined, then all pairs in the same cosets are joined.

If the length is $1$, then $Z_\infty(G)=Z(G)=Z$, say, the center of $G$.
The elements of $G/Z$ are the cosets of $Z$ in $G$. If $[x,{}_ky]=1$, then
$[xZ,{}_kyZ]=Z$. Conversely, if $[xZ,{}_kyZ]=Z$ in $G/Z$, then $[x,{}_ky]\in Z$
in $G$, and so $[x,{}_{k+1}y]=1$. Moreover, the induced subgraph on $Z$ is
complete. This demonstrates the result in this case.

Suppose that $n\ge1$ and we have proved the result when $Z_\infty(G)=Z_n(G)$.
For $G/Z_{n+1}(G)$ with $n>1$, we have
\[G/Z_n(G)\cong(G/Z_{n-1}(G))/(Z_n(G)/Z_{n-1}(G)),\]
and the result follows.
\end{proof}

We will use this theorem a number of times in what follows. It will also be
useful in further research: it implies that, in studying groups $G$ whose
co-Engel graph is perfect, or chordal, or a cograph, or in investigating the
effect of twin reduction, we may assume without loss that the center of $G$
is trivial.

Other results about the Engel digraph do not extend to the undirected case.
For example, the \emph{strong product} of digraphs $D_1$ and $D_2$ has
vertex set $V(D_1)\times V(D_1)$, with an arc $(a,x)\to(b,y)$ if either
$a=b$ or $a\to b$, and either $x=y$ or $x\to y$, but not both $a=b$ and
$x=y$.

\begin{prop}
Let $G$ and $H$ be finite groups. Then the Engel digraph of $G\times H$
is the strong product of the Engel digraphs of $G$ and $H$.
\end{prop}

The proof is straightforward. However, it does not hold for the Engel and
co-Engel graphs. For, if there are single arcs $a\to b$ and $y\to x$, then
the strong product of the Engel graphs has an edge $\{(a,x),(b,y)\}$ which
is not an edge of the Engel graph of $G\times H$. (This problem does not arise
if one of the factors is nilpotent, since its Engel digraph has no single
arcs. Indeed, if $G$ is nilpotent, then it is contained in the hypercenter
of $G\times H$.)

\medskip

We mention here a result of Abdollahi~\cite{aa1}:

\begin{thm}
For a finite group $G$, the index $|G:F(G)|$ is bounded by a function of the
clique number of the co-Engel graph of $G$ (that is, the independence number
of the Engel graph).
\end{thm}

It follows, for example, that the number and orders of non-abelian composition
factors of $G$ are bounded by a function of the clique number of the
co-Engel graph; indeed, he showed that, if the clique number is at most~$15$,
then the group is soluble.

An early result on the power graph of a finite group~\cite{Cameron:pg}  is
that, if the power graphs of two groups are isomorphic, then the directed power
graphs are isomorphic. We pose the same question for the Engel graph.
Computationally, we found just two orders less than $100$ for which this is
false, namely $54$ and $96$. For $n=54$, there are five groups (numbers $3$,
$5$, $6$, $8$ and $13$ in the list of small groups of order $54$ in
\textsf{GAP}~\cite{gap}), all of which have isomorphic undirected Engel graphs;
 but the directed Engel graphs are of two isomorphism types, one realised by
groups number $5$ and $6$, the other by the other three. Similarly, for $n=96$,
 groups with numbers $3$, $68$, $70$, $71$, $72$, $203$, $204$ and $229$ all
have isomorphic undirected Engel graphs, but the directed graphs fall into two
isomorphism types, one for groups $70$, $71$ and $72$, the other for the rest.

\section{Nilpotence, Fitting subgroup, and Engel digraph}
\label{s:NFE}

It is clear from the definition that, if $H$ is a subgroup of $G$, then the
induced subgraph on $H$ of the Engel digraph of $G$ is the Engel digraph of
$H$. It follows that whether two elements $x$ and $y$ are joined by a double
arc, a single arc, or none at all is determined by the subgroup
$\langle x,y\rangle$. Moreover, we have seen the following:
\begin{enumerate}
\item If $\langle x,y\rangle$ is nilpotent, then there are arcs $x\to y$
and $y\to x$.
\item If $y$ is in the Fitting subgroup of $\langle x,y\rangle$, then there
is an arc $y\to x$.
\end{enumerate}
Things would be much simpler if the converses of these statements were true;
but they are not.
\begin{enumerate}
\item If $x=(1,2,3)(4,5)$ and $y=(1,4,5,8)(2,6)(3,7)$, then $[x,{}_4y]=1$ and
$[y,{}_3x]=1$, but $\langle x,y\rangle=S_8$ (which is not nilpotent).
\item If $x=(1,2)(3,4,5)$ and $y=(2,3)$, then $[x,{}_2y]=1$, but 
$\langle x,y\rangle=S_5$ (whose Fitting subgroup is trivial).
\end{enumerate}

But computations with \textsf{GAP}~\cite{gap} show that the groups $S_4$ and
$A_5$ do satisfy the converse of (b), while $S_7$ does satisfy the converse
of (a).

Accordingly, we make the following definitions:
\begin{enumerate}
\item A group $G$ is \emph{two-sided Engel tame} if for all $x,y\in G$, if
$x\to y$ and $y\to x$ in $\deng(G)$, then $\langle x,y\rangle$ is nilpotent.
\item A group $G$ is \emph{one-sided Engel tame} if for all $x,y\in G$, if
$y\to x$ in $\deng(G)$, then $y\in F(\langle x,y\rangle)$.
\end{enumerate}
Note that the classes of one- and two-sided Engel tame groups are both
closed under taking subgroups, finite direct products, and central quotients
and extensions. The proofs of these are all very similar. As an illustration,
here is the proof that a central quotient of a one-sided Engel tame group has
the same property.

Suppose that $G$ is one-sided Engel tame and let $Z$ be a subgroup of $Z(G)$.
Take $x,y\in G$ and suppose that $[xZ,{}_kyZ]=Z$ in $G/Z$. Then
$[x,{}_ky]\in Z$, so $[x,{}_{k+1}y]=1$. By assumption,
$y\in F(\langle x,y\rangle)$. Then
$yZ\in F(\langle x,y\rangle)Z/Z$, which is a nilpotent normal subgroup of
$\langle xZ,yZ\rangle/Z$, so $yZ\in F(\langle xZ,yZ\rangle/Z)$, completing
the proof.

It may also be that investigations such as those in the remainder of this paper
may  be easier for groups in one or other of these classes. Note, however, that
the counterexamples to reconstruction of the directed Engel graph from
the undirected version, at the end of the preceding section, are all both
one- and  two-sided Engel tame.

Another approach to the study of pairs $(x,y)$ with $x\to y$ and $y\to x$ is
as follows. We define the \emph{strong Engel graph} of $G$ to be the graph
with vertex set $G$, having an edge $\{x,y\}$ if $x\to y$ and $y\to x$.

The edge set of this graph contains the edge set of the nilpotency graph,
the graph with edges $\{x,y\}$ if $\langle x,y\rangle$ is nilpotent, and is
contained in the edge set of the Engel graph. By Theorem~\ref{t:single}, the
strong Engel graph of $G$ is equal to the Engel graph if and only if $G$
is nilpotent. Also, it is equal to the nilpotency graph if and only if $G$ is
two-sided Engel tame.

We have not studied the strong Engel graph, but can report the following
result using the notion of Engel depth. First, a few definitions.
The \emph{Engel depth} of the pair $(x,y)$ is the minimum $k$ such that
$[x,{}_ky]=1$, if such $k$ exists; otherwise the Engel depth is infinite. Now
we can define the \emph{$k$-Engel digraph} of $G$ to have an arc $x\to y$ if
$(x,y)$ has Engel depth at most $k$. Similarly, the $k$-nilpotency graph has
an edge $\{x,y\}$ if and only if $\langle x,y\rangle$ is nilpotent with class
at most~$k$.

\begin{prop}
Let $x$ and $y$ be elements of a group $G$.
\begin{enumerate}
\item If $(x,y)$ has Engel depth $1$, then so does $(y,x)$, and $\{x,y\}$ is
an edge of the commuting graph of $G$.
\item Both $(x,y)$ and $(y,x)$ have Engel depth $2$ if and only if
$\langle x,y\rangle$ is nilpotent of class~$2$.
\end{enumerate}
\end{prop}

\begin{proof}
(a) is clear, so we prove (b). Suppose that both $(x,y)$ and $(y,x)$ have
Engel depth $2$; then $[x,y,y]=[y,x,x]=1$. Since $[y,x]=[x,y]^{-1}$, this
means that $[x,y]$ commutes with both $x$ and $y$, and so lies in the center
of $H=\langle x,y\rangle$.

Let $Z=Z(H)$. Then $[x,y]\in Z$, so $[xZ,yZ]=1$ in $H/Z$, which implies that
$H/Z$ is abelian; so $H$ is center-by-abelian and so nilpotent of class at
most~$2$. It is non-abelian, so has class~$2$.

Conversely, if $\langle x,y\rangle$ is nilpotent of class at ost~$2$, then
$H'\le Z(H)$, so $[x,y]$ commutes with both $x$ and $y$; so it lies in $Z(H)$,
whence $[x,y,y]=[y,x,x]=1$.
\end{proof}

Thus, the double arcs in the $2$-Engel graph are the edges of the
$2$-nilpotency graph. Examples above show that there is no similar result for
larger Engel depth.

We are grateful to Michael Kinyon for this argument, which simplifies
our original proof.

\section{Universality of Engel digraphs}

Engel digraphs of finite digroups are \emph{universal}: that is, every finite
digraph can be embedded as an induced sub-digraph in the Engel digraph of
some finite group. This contrasts whith the situation for other digraphs on
groups such as the power digraph~\cite{kq} and endomorphism digraph~\cite{acgl},which are transitive as relations.

We are actually going to prove a much stronger theorem, using the notion of
Engel depth defined above. We prove the following theorem:

\begin{thm}
Let $D$ be a finite digraph, whose arcs are labelled with positive integers.
Then the following two conditions are equivalent:
\begin{enumerate}
\item If an arc $x\to y$ has label $1$, then there is also an arc $y\to x$
with label $1$.
\item There is a finite group $G$ and an embedding of the vertex set of
$D$ into $G$ with the properties that $x\to y$ is an arc with label $k$ if
and only if $(x,y)$ has Engel depth $k$, and there is no arc $x\to y$ if and
only if $(x,y)$ has infinite Engel depth.
\end{enumerate}
\end{thm}

\begin{proof}
First we show that (b) implies (a). The pair $(x,y)$ has Engel
depth $1$ if and only if $x$ commutes with $y$; this is equivalent to saying
that $y$ commutes with $x$, so also $(y,x)$ has Engel depth~$1$.

\medskip

To prove the converse, assume that (a) holds; for every possible $2$-vertex
digraph, we are going to construct a group $G$ with distinguished elements
$x$ and $y$ with the properties that there is an arc $x\to y$ with label $k$
if and only if $(x,y)$ has Engel depth $k$ (and no arc if and only if the
Engel depth is infinite), and similarly with $x$ and $y$ interchanged.

First we make an observation about direct products. Suppose that
$x_i,y_i\in G_i$ for $i=1,2$. Then the Engel depth of $(x_1x_2,y_1y_2)$
in $G_1\times G_2$ is the maximum of the Engel depths of $(x_1,y_1)$ in $G_1$
and $(x_2,y_2)\in G_2$. The proof is clear.

Consider the dihedral group 
\[G=\langle a,b\mid a^{2^n}=1,b^2=1,b^{-1}ab=a^{-1}\rangle\]
of order $2^{n+1}$. Now $[b,a]\in A$, so $[b,a,a]=1$; this $(a,b)$ has
Engel depth $2$. On the other hand, $[a,b]=a^{-2}$, whose order is
$2^{n-1}$; and each further commutation with $b$ halves the order, so the
Engel depth of $(b,a)$ is $n$.

Let $b'=ab$. Then $[b,b']=a^{-2}$, with order $2^{n-1}$; as above, we find
that $(b',b)$ has Engel depth $n$. The same holds for $(b,b')$.

So we can realise the case where there are arcs $x\to y$ with label $k$ and
$y\to x$ with label $l$ as follows:
\begin{itemize}
\item If $k=l=1$, let $x$ and $y$ be distinct elements in an abelian group $G$.
\item If $k=l>1$, let $x=b$, $y=ab$ in the dihedral group of order $2^{k+1}$.
\item If $k=2$ and $l>2$, let $x=a$, $y=b$ in the dihedral group of order
$2^{l+1}$.
\item If $2<k<l$, take $G$ to be the direct product of dihedral groups of orders
$2^{k+1}$ (with generators $a_1$ and $b_1$) and $2^{l+1}$ (with generators
$a_2$ and $b_2$), with $x=b_1a_2$ and $y=a_1b_1b_2$.
\end{itemize}

Now suppose that $k$ is finite and greater than $1$, while $l$ is infinite.
Take $G_1$ to be dihedral of order $2^{k+1}$, and $G_2$ to be dihedral of 
order $6$ (with $c$ of order~$3$ and $d$ of order $2$). In $G_1\times G_2$,
take $x=bc$ and $y=abd$, noting that $(c,d)$ has infinite Engel depth.

Finally, suppose that both $k$ and $l$ are infinite. We can take $G$ to be
the direct product of two copies of the dihedral group of order $6$, with 
$x=c_1d_2$ and $y=d_1c_2$.

\medskip

So the theorem is proved for $2$-vertex subgraphs, and we need to extend to
subgraphs with larger numbers of vertices.

For each pair of vertices $u,v$ of $D$, choose a group realising the induced
sub-digraph on $\{u,v\}$ according to the arguments just given. There is no
overlap between groups for different pairs. Let $G(u,v)$ be the group
corresponding to the vertices $u$ and $v$, and let $g_u$ and $g_v$ be the
group elements which are identified with those vertices. (A little care is
required; in some cases the two elements $x$ and $y$ are symmetric in the
group, we have to specify which is associated with which vertex of $D$.)

Take $G$ to be the direct product of these groups over all pairs $u,v$. Now
take a vertex $u$.  We identify it with an element of the direct product as
follows: in any factor indexed by a pair containing $u$, we choose the element
$g_u$; in any other factor, we choose the identity. Clearly different vertices
of $D$ are identified with different group elements. Now when we evaluate an
Engel commutator, say $[y,{}_kx]$, on a pair $x=u$, $y=v$, the commutator
evaluates correctly in the group $G(u,v)$; for any other group, at least one
of $x$ and $y$ has been identified with the identity, and so even the first
commutator is trivial. Thus the Engel depth of the group elements chosen to
represent $u$ and $v$ agrees with the number on the arc $u\to v$ if there is
one, and is infinite if there is no such arc.

Thus, the theorem is proved.
\end{proof}

There is a similar definition of universality for a class of finite graphs. Now
we have the following.

\begin{cor}
The class of co-Engel graphs is universal.
\end{cor}

The proof is almost immediate from the Theorem above.

\section{Realization  of  $\coeng(G)$}\label{s:next}

From this point on, we turn our attention to properties of co-Engel graphs of
some non-Engel (that is, non-nilpotent groups). Our focus will be on particular
examples.  We remind the reader that, from this point on, we delete the isolated vertices (the elements of the Fitting subgroup). 

In this section we realize certain graphs as $\coeng(G)$ for certain finite groups.
Let $K_n$ be the complete graph on $n$ vertices and  $K_{n_1, n_2, \dots, n_m}$ be the complete $m$-partite graph with parts of size $n_1, n_2, \dots, n_m$. We write $K_{m \cdot n} = K_{n_1, n_2, \dots, n_m}$  if $n_1 = n_2= \cdots = n_m = n$.

We denote the dihedral group of order $2n$ by $D_{2n} $ (in the literature this
group is sometimes called $D_n$).

It was shown in \cite{aa} that 
$\coeng(D_6)\cong K_3$ and $\coeng(D_{12})\cong K_{3\cdot2}$.
In the following theorem we shall generalize these results and realize 
$\coeng(G)$, with $G$ is a dihedral group
$D_{2m} = \langle x,y \mid y^m = x^2= 1,xyx^{-1} = y^{-1} \rangle$ or $D_{2^{t+1}m} = \langle x,y \mid y^{2^tm} = x^2= 1,xyx^{-1} = y^{-1}\rangle$, or a generalised quaternion group $Q_{2^{t+1}m} = = \langle x,y \mid y^{2^tm} = 1, x^2=y^{2^{t-1}m},xyx^{-1} = y^{-1}\rangle$,
where $t \geq 1$ and $m \geq 3$ is odd.

\begin{thm} \label{dihed} 
If $t \geq 1$ and $m \geq 3$ is odd, then $\coeng(D_{2^{t+1}m})$
and $\coeng(Q_{2^{t+1}m})$ are isomorphic  to $K_{m \cdot 2^t}$. Further, 
$\coeng(D_{2m})$  is isomorphic to $K_m$. 
\end{thm}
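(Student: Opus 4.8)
The plan is to treat all three families uniformly by writing every element in normal form and reducing each Engel condition to a divisibility statement about the order of $y$. Write the cyclic ``rotation'' subgroup as $\langle y\rangle$, whose order is $m$ for $D_{2m}$ and $2^tm$ for both $D_{2^{t+1}m}$ and $Q_{2^{t+1}m}$; every element is then either a rotation $y^i$ or a ``non-rotation'' $xy^i$, and the one fact I would use repeatedly is that conjugation by any non-rotation inverts $\langle y\rangle$.

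First I would identify $L(G)$. By Baer's theorem (quoted in the introduction) $L(G)=\Fit(G)$, and I claim $\Fit(G)=\langle y\rangle$. Rather than analysing normal subgroups directly --- which is awkward for $Q_{2^{t+1}m}$, since $x^2=y^n$ drags extra rotations into any subgroup generated by a non-rotation --- I would verify the Engel conditions by hand. A rotation $y^s$ is a left Engel element because $[g,{}_2\,y^s]$ always lands in $\langle y\rangle$ and hence commutes with $y^s$; concretely $[xy^j,y^s]=y^{2s}$ and then $[y^{2s},y^s]=1$. Conversely no non-rotation $xy^i$ is left Engel: taking $g=y$ and using inversion one computes $[y,{}_k\,xy^i]=y^{(-2)^k}$, which is nontrivial for every $k$ because the order of $y$ has odd part $m\ge 3$ and so never divides a power of $2$. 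Hence $L(G)=\langle y\rangle$, and the vertex set of $\coeng{G}$ is exactly the set of non-rotations $\{xy^i\}$, of size $m$ (resp.\ $2^tm$).

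The computational heart is the iterated commutator of two non-rotations $a=xy^i$ and $b=xy^j$. Using $xy^a\cdot xy^b=x^2y^{b-a}$ I would first obtain $ab=y^{\epsilon+j-i}$ and $a^{-1}b^{-1}=y^{j-i-\epsilon}$, where $\epsilon=0$ in the dihedral case and $\epsilon=n$ (with $x^2=y^n$) in the quaternion case; crucially the $\epsilon$'s cancel, so in \emph{both} families $[a,b]=y^{2(j-i)}$. Since conjugation by $b$ inverts $\langle y\rangle$, an easy induction then gives $[a,{}_k b]=y^{(-1)^{k-1}2^k(j-i)}$, and by symmetry the same formula (with $i,j$ swapped) governs $[b,{}_k a]$. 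This single identity is what forces the dihedral and generalised quaternion groups to yield the same graph.

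It then remains to read off adjacency. The pair $a,b$ is \emph{non}-adjacent precisely when $(\mathrm{ord}\,y)\mid 2^k(j-i)$ for some $k$. In every case the odd part of $\mathrm{ord}\,y$ is $m$, and since $\gcd(2^k,m)=1$ the $2$-part of the modulus is absorbed by taking $k$ large, so the condition collapses to $m\mid(j-i)$; cleanly separating the $2$-part from the odd part of the modulus is the step I expect to be the main (if mild) obstacle. Thus two distinct non-rotations are non-adjacent iff $i\equiv j\pmod m$. For $D_{2m}$ the indices satisfy $0\le i<m$, so distinct vertices always have $i\not\equiv j\pmod m$ and the graph is complete, giving $K_m$. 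For $D_{2^{t+1}m}$ and $Q_{2^{t+1}m}$ the relation $i\equiv j\pmod m$ partitions the $2^tm$ vertices into $m$ independent classes of size $2^t$ each, with all cross-edges present, which is by definition the complete multipartite graph $K_{m\cdot 2^t}$, completing the proof.
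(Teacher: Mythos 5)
Your proposal is correct and follows essentially the same route as the paper's proof: identify $L(G)=\langle y\rangle$, compute the iterated commutator $[xy^i,{}_k\,xy^j]=y^{\pm 2^k(i-j)}$ (with the $x^2=y^n$ contribution cancelling in the quaternion case), and reduce adjacency to $i\not\equiv j\pmod m$, which partitions the non-rotations into $m$ classes of size $2^t$ (size $1$ for $D_{2m}$). The only difference is that you verify $L(G)=\langle y\rangle$ by direct Engel computations where the paper simply asserts $\Fit(G)=\langle y\rangle$ and cites Baer; this is a minor elaboration rather than a different approach.
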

\begin{proof}
Suppose that $G$ denotes one of the groups $D_{2^{t+1}m}$ or $Dic_{2^{t+1}m}$. Then	
 $F(G) = \langle y\rangle$. Therefore, the set of non-isolated vertices is $G\setminus F(G)=\{x,xy,\dots,xy^{2^tm-1}\}$, where $F(G)$ is the Fitting subgroup. We have
\[ 
[xy^i, {}_nxy^j]= \begin{cases} 
      y^{2^n(i-j)}, & \textrm{ if $n$ is even} \\
      y^{2^n(j-i)}, & \textrm{ if $n$ is odd.} \\
   \end{cases} 
\]
Thus, $[xy^i, {}_nxy^j]= 1$ if and only if $2^tm\mid 2^n(i-j)$, that is $i \equiv j \mod m$. Let
$$
A_j=\{xy^i \mid 0\leq i\leq 2^tm-1, i \equiv j \mod m\}
$$
for $j=0, 1, \ldots,m-1$.
Then $A_0, A_1, \dots, A_{m-1}$ are all disjoint subsets of $G\setminus F(G)$ such that $|A_i| = 2^t$ for all $0 \leq i \leq m - 1$. Notice that $u\in A_i$ and $v\in A_j$ are adjacent if and only if $i\neq j$. Therefore, $\coeng(G)\cong K_{2^t, \dots, 2^t}=K_{m \cdot 2^t}$. 

On the other hand, for $D_{2m}$ (where $m$ is odd) we have $|A_i|=1$ for all $0 \leq i \leq m - 1$. Hence, $\coeng(D_{2m})\cong K_m$. 
\end{proof}

In the next theorem, we realize $\coeng(G)$ when $G = F_{p, q}$, the non-abelian group of order $pq$ where $p$ and $q$ are primes.
\begin{thm} \label{pq} 
If $G$ is the group $F_{p, q} = \langle a,b \mid a^p=b^q=1, a^{-1}ba=b^r\rangle$ of order $pq$, where $p$ and $q$ are primes such that $q\equiv 1 \mod p$ and $r^p\equiv 1 \mod q$, then $\coeng(G)\cong K_{q \cdot (p-1)}$. 
\end{thm}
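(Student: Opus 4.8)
The plan is to follow the template of Theorem~\ref{dihed}: first pin down $L(G)$, then reduce adjacency to a commuting condition, and finally read off the multipartite structure. Since $G=F_{p,q}$ is a Frobenius group with kernel $\langle b\rangle$ of order $q$ and complement $\langle a\rangle$ of order $p$, and $G$ is non-nilpotent, Baer's theorem~\cite{rb} gives $L(G)=\Fit(G)=\langle b\rangle$. Hence $G\setminus L(G)=\{a^ib^j : 1\le i\le p-1,\ 0\le j\le q-1\}$, a set of $q(p-1)$ elements, which already matches the number of vertices of $K_{q\cdot(p-1)}$.

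The key computation is the iterated commutator. Using $a^{-i}ba^{i}=b^{r^{i}}$, and noting that $G/\langle b\rangle$ is abelian so that $[x,y]\in\langle b\rangle$, say $[x,y]=b^{m_0}$, I would check that the commutator of $b^m$ with $y=a^ub^v$ scales exponents as $[b^m,y]=b^{m(r^u-1)}$. Iterating yields the closed form
\[
[x,\,{}_k\,y]=b^{\,m_0(r^u-1)^{k-1}}\qquad(k\ge 1).
\]
The crucial point is that when $y\in G\setminus L(G)$ we have $1\le u\le p-1$, so $r^u\not\equiv 1\pmod q$ (as $r$ has order exactly $p$ modulo $q$); thus $r^u-1$ is a unit modulo $q$. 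Consequently $[x,\,{}_k\,y]=1$ for some $k$ if and only if $q\mid m_0$, i.e.\ if and only if $[x,y]=1$. Since this condition is symmetric in $x$ and $y$, two distinct vertices of $\coeng{G}$ are adjacent precisely when they do not commute; that is, $\coeng{G}$ is the non-commuting graph on $G\setminus L(G)$.

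It then remains to describe the commuting structure on $G\setminus L(G)$, and here the Frobenius property does the work. The complement $\langle a\rangle$ is self-normalising, so it has exactly $q$ conjugates, and by the Frobenius condition any two of them meet only in the identity; hence their non-identity parts partition $G\setminus\langle b\rangle=G\setminus L(G)$ into $q$ blocks, each of size $p-1$. Each block, being a conjugate of the abelian group $\langle a\rangle$ with the identity removed, consists of mutually commuting elements; and since the centraliser of any such element equals the conjugate of $\langle a\rangle$ containing it, elements of distinct blocks never commute. Therefore two vertices commute if and only if they lie in the same block, so the non-commuting graph is the complete $q$-partite graph with all parts of size $p-1$, giving $\coeng{G}\cong K_{q\cdot(p-1)}$.

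The main obstacle I anticipate is bookkeeping rather than conceptual: keeping the exponent arithmetic and the direction of conjugation by $a$ consistent throughout the iterated-commutator computation, and justifying cleanly that $r^u-1$ is invertible modulo $q$. An alternative, purely computational route would mirror Theorem~\ref{dihed} exactly—parametrise the vertices as $a^ib^j$, evaluate $[a^ib^j,\,{}_k\,a^{i'}b^{j'}]$ directly, and define the parts $A_j$ by the resulting congruence—but I expect the Frobenius-structure argument above to be shorter and more transparent.
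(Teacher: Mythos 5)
Your proof is correct, and it follows the same basic plan as the paper's: identify $L(G)=\Fit(G)=\langle b\rangle$, observe that iterated commutators land in $\langle b\rangle$ and get multiplied by $r^u-1$ at each step, and identify the parts of the multipartite graph with the $q$ subgroups of order $p$ (your conjugates of $\langle a\rangle$ are exactly the paper's subgroups $H_i=\langle ab^i\rangle$). Where you genuinely diverge is in the final step. The paper computes the exponent of $[x,y]=b^k$ explicitly for $x\in H_i$, $y\in H_j$, simplifies it to $k=(i-j)(r^t-1)(r^s-1)/(r-1)$, and checks directly that $q\nmid k$; this exponent bookkeeping is the bulkiest part of their argument. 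You instead isolate the clean intermediate statement that, since $r^u-1$ is a unit modulo $q$ for $1\le u\le p-1$, adjacency in $\coeng{G}$ is equivalent to non-commuting, and then you settle the commuting structure by a centraliser/Frobenius argument with no exponent arithmetic at all. Your route buys a shorter, more conceptual finish and makes explicit a fact the paper never states, namely that $\coeng{F_{p,q}}$ coincides with the non-commuting graph on $G\setminus\Fit(G)$; the paper's version is more self-contained, needing no structural input about Frobenius groups or centralisers. One implicit hypothesis you share with the paper and should flag when writing this up: one needs $r\not\equiv 1\pmod q$ (so that $r$ has order exactly $p$ modulo $q$) both for $G$ to be non-abelian and for your claim that $r^u-1$ is invertible modulo $q$.
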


\begin{proof}
We have \quad $F_{p, q} =\{1,b,b^2,\dots,b^{q-1},a,a^2,\dots,a^{p-1},ab,\dots,$ $ab^{q-1},\dots,a^{p-1}b,\dots,$ $a^{p-1}b^{q-1}\}$. Thus, if  $g \in F_{p, q}$ then  $g = a^ib^j$, where $0\leq i\leq p-1$ and $0\leq j\leq q-1$.

We also have $a^ib^ta^jb^s=a^{i+j}b^{s+tr^j}$. Therefore, if $i\neq 0$ then $(a^ib^j)^p=1$. Note that the group $F_{p, q}$ is the union of its subgroups $H_b=\langle b\rangle = F(F_{p, q})$ and 
$$
H_i=\langle ab^i\rangle =\{1,ab^i,a^2b^{i(r+1)},a^3b^{i(r^2+r+1)}, \dots,a^{p-1}b^{i(r^{(p-1)-1}+r^{(p-1)-2}+\dots+r+1)}\},
$$
where $0 \leq i \leq q-1$. It is easy to see that the intersection of any two of these subgroups is $Z(F_{p, q}) = \{1\}$.

Let $x=a^tb^{i(r^{t-1}+\dots+r+1)}=a^tb^{it'}\in H_i$ and $y=a^sb^{j(r^{s-1}+\dots+r+1)}=a^tb^{js'}\in H_j$, where $t'=r^{t-1}+\dots+r+1$ and $s'=r^{s-1}+\dots+r+1$. If $i=j$, then $[x,y]=1$. So suppose that $i\neq j$. We have 
$$
[x,y]=b^{it'(r^s-1)+js'(1-r^t)}=b^k,
$$
where $k={it'(r^s-1)+js'(1-r^t)}$. Thus
$$
[x, {}_2y]=[b^k,a^sb^{js'}]=b^{k(r^s-1)}=b^l,
$$
where $l=k(r^s-1)$. Therefore. $$
[x, {}_3y]=[b^l,a^sb^{js'}]=b^{l(r^s-1)}=b^{k(r^s-1)^2}.
$$
In general, we have $[x, {}_ny]=b^{k(r^s-1)^{n-1}}$.

Again, 
\begin{align*}
k &=it'(r^s-1)+js'(1-r^t)\\
&=i(t'r^s-t')+j(s'-r^ts')=i(t'r^s+s'-s'-t')+j(s'-r^ts'-t'+t')\\
&=i((t+s)'-s'-t')+j(s'- (t+s)' + t'),
\end{align*} 
where \quad $(t+s)' = r^{t+s-1}+\dots+r^{s+1}+r^s+r^{s-1}+\dots+r+1$. Thus 
\begin{align*}
k=(i-j)((t+s)'-s'-t') &=(i-j)\left(\frac{r^{t+s}-1}{r-1}-\frac{r^{s}-1}{r-1}-\frac{r^{t}-1}{r-1}\right)\\
& = \frac{(i-j)(r^{t}-1)(r^{s}-1)}{r-1}.
\end{align*}
Clearly $q$ does not divide $(i-j), (r^{t}-1)$ and $(r^{s}-1)$, since $1\leq i, j \leq q-1$, $i\neq j$ and $0\leq r, s \leq p-1$.
Therefore, $q \nmid k(1-r^s)^{n-1}$ and so $[x, {}_n y]\neq 1$ for any integer $n$. Hence, the result follows.
\end{proof}

The following remark is useful in realizing $\coeng(G)$ if $G$ is a  direct product of two groups.
\begin{rem}\label{remcr}
\begin{enumerate}
\item Let $G$ and $H$ be two groups and $(u,x),(v,y)\in G\times H$. Then
\[
[(u,x),(v,y)]=(u^{-1}v^{-1}uv,x^{-1}y^{-1}xy)=([u,v],[x,y]).
\]
In general, we have
\[
[(u,x), {}_n[v,y])=([u, {}_nv],[x, {}_ny]).
\]
\item Let $G$ be a group and $x,y \in G$. If $[x, {}_ny]=1$ for some positive integer $n$, then $[x, {}_{n+1}y]=[[x, {}_ny],y]=[1,y]=1$. Thus $[x, {}_my]=1$ for all $m\geq n$.
\end{enumerate}
\end{rem}

\begin{thm}\label{bipar} 
Let $G$ be a finite non-Engel group such that $\coeng(G))\cong K_{m \cdot n}$. Let $H$ be a finite Engel group with $|H|=l$. Then $\coeng(H\times G)\cong K_{lm \cdot n}$. 
\end{thm}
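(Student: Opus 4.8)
The plan is to show that forming the direct product with the Engel group $H$ merely replaces each vertex of $\coeng{G}$ by $l=|H|$ mutually non-adjacent copies, so that the complete multipartite shape is preserved while each part size is scaled by $l$. The two ingredients are Remark~\ref{remcr}(1), which gives the coordinatewise formula $[(h_1,g_1),{}_k(h_2,g_2)]=([h_1,{}_kh_2],[g_1,{}_kg_2])$ in $H\times G$, and a uniform Engel bound for $H$: since $H$ is a finite Engel group, $L(H)=H$, so every pair $h_1,h_2$ satisfies $[h_1,{}_kh_2]=1$ for some $k$; by Remark~\ref{remcr}(2) this persists for larger $k$, and finiteness of $H$ yields a single $K_0$ with $[h_1,{}_kh_2]=1$ for all $h_1,h_2\in H$ and all $k\ge K_0$.

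I would first pin down the vertex set. Combining the commutator formula with $K_0$ gives $[(h',g'),{}_k(h,g)]=(1,[g',{}_kg])$ for $k\ge K_0$, so $(h,g)$ is a left Engel element of $H\times G$ exactly when $g\in L(G)$. Hence $L(H\times G)=H\times L(G)$ and $v(\coeng{H\times G})=H\times(G\setminus L(G))$, of cardinality $l\cdot mn$.

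The crux is the adjacency rule. I claim that ``$[(h_1,g_1),{}_k(h_2,g_2)]\ne1$ for all $k$'' is equivalent to ``$[g_1,{}_kg_2]\ne1$ for all $k$''. For the forward implication, choosing $k\ge K_0$ forces the first coordinate to vanish, hence the second to be nontrivial, and Remark~\ref{remcr}(2) propagates this back to every $k$; the converse is immediate, since a nontrivial second coordinate already makes the pair nontrivial. Applying this and its mirror statement, two distinct vertices $(h_1,g_1),(h_2,g_2)$ are adjacent in $\coeng{H\times G}$ if and only if $g_1$ and $g_2$ are adjacent in $\coeng{G}$; in particular the $H$-coordinates are irrelevant, and when $g_1=g_2$ but $h_1\ne h_2$ the two vertices are non-adjacent.

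It remains to read off the structure. The adjacency rule exhibits $\coeng{H\times G}$ as the blow-up of $\coeng{G}$ in which each vertex becomes an independent set of $l$ copies and edges become complete joins. Writing the parts of $\coeng{G}\cong K_{m\cdot n}$ as $P_1,\dots,P_m$, each $H\times P_a$ is independent of size $ln$ while distinct ones are completely joined, so $\coeng{H\times G}\cong K_{m\cdot ln}$ ($m$ parts of size $ln$). The one delicate point is the interaction of the universal quantifier over $k$ with the Engel hypothesis, which is precisely what $K_0$ and the monotonicity of Remark~\ref{remcr}(2) resolve. I note that the argument produces $m$ parts of size $ln$; thus the stated $K_{lm\cdot n}$ should read $K_{m\cdot ln}$, the two coinciding only when $l=1$ (e.g. $C_2\times D_6\cong D_{12}$ gives $\coeng{D_{12}}\cong K_{3\cdot2}$, not $K_6$).
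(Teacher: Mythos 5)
Your proof is correct and follows essentially the same route as the paper's: the coordinatewise commutator formula of Remark~\ref{remcr}, a uniform Engel bound for the finite Engel group $H$ to annihilate the first coordinate for large $k$, the identification $L(H\times G)=H\times L(G)$, and the observation that adjacency depends only on the $G$-coordinates, so that each part of $\coeng{G}$ is blown up by a factor of $l$.

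Your closing remark, moreover, identifies a genuine error in the \emph{statement} rather than in your argument: the correct conclusion is $K_{m\cdot(ln)}$ ($m$ parts of size $ln$), not $K_{lm\cdot n}$ ($lm$ parts of size $n$), and these graphs are not isomorphic unless $l=1$ or $n\cdot l=1$-trivialities intervene. The paper's own proof actually exhibits the $m$ partite sets $H\times G_1,\dots,H\times G_m$, each of cardinality $ln$, and then mislabels the resulting graph. Your counterexample is decisive: $C_2\times D_6\cong D_{12}$ and $\coeng{D_{12}}\cong K_{3\cdot2}$ by Theorem~\ref{dihed}, whereas the stated form would give $K_{6\cdot1}=K_6$. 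The slip propagates: Theorem~\ref{bipar-genus} and Corollaries~\ref{dihede} and~\ref{pqcor} feed $a=lm$, $b=n$ into the genus formula where the structure warrants $a=m$, $b=ln$ (for instance $\coeng{C_2\times D_6}\cong K_{2,2,2}$ is planar, while $\gamma(K_6)=1$).
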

\begin{proof}
Let $(u,x),(v,y)\in H\times G$. Then, by the Remark \ref{remcr}(a), we have
$$
[(u,x), {}_k(v,y)]=([u, {}_kv],[x, {}_ky])\text{ and } [(v,y), {}_k(u,x)]=([v, {}_ku],[y, {}_kx]).
$$
Since $H$ is a finite Engel group, there exist  positive integers $i$ and $j$, such that $[u, {}_iv]=1$ and $[v, {}_ju]=1$. Suppose $[x, {}_ay]=1$ and $[y, {}_bx]=1$ for some positive integers $a$ and $b$. Let $r\geq \max\{i,a\}$ and $s\geq \max\{j,b\}$. Then, by Remark \ref{remcr}(b), we have $([u, {}_rv],[x, {}_ry])=1$ and $([v, {}_su],[y, {}_sx])=1$. Hence, it follows that $([u, {}_kv],[x, {}_ky])\neq 1$ and $([v, {}_ku],[y, {}_kx])\neq 1$ if and only if $[x, {}_ky]\neq 1$ and $[y, {}_kx]\neq 1$ for all positive integer $k$. Thus, $F(H\times G)=H\times F(G)$ and hence $v(\coeng(H\times G))=H\times (G \setminus F(G))$. 

Let $G_1, G_2,\dots,G_m$ be the partite sets of $\coeng(G) \cong K_{m \cdot n}$. Since  $([u, {}_kv],[x, {}_ky])$ $\neq 1$ and $([v, {}_ku],[y, {}_kx])\neq 1$ if and only if $[x, {}_ky]\neq 1$ and $[y, {}_kx]\neq 1$ for all positive integer $k$, we have $\coeng(H\times G)$ is isomorphic to $K_{lm \cdot n}$ with partite sets $H\times G_1, H\times G_2,\dots,H\times G_m$. This completes the proof.
\end{proof}

\section{Genus of $\coeng(G)$}\label{S:Genus}

The genus $\gamma(\Gamma)$ of a graph $\Gamma$ is the smallest genus of a surface in
which it can be drawn without crossing edges.
In this section, we determine the genus of $\coeng(G)$ for the groups considered in  Section~\ref{s:next}. It is well-known that 
\begin{equation}\label{genus-Kn}
\gamma(K_n) = \left\lceil \frac{(n - 3)(n-4)}{12}\right
\rceil
\end{equation}
for $n \geq 3$. 
If $K_{m, n}$ denotes the complete bipartite graph with parts of size  $m, n \geq 2$, then 
\begin{equation}\label{genus-Km,n}
	\gamma(K_{m, n}) = \left\lceil \frac{(m - 2)(n-2)}{4}\right
	\rceil.
\end{equation}
From the main theorem of \cite{whi-1969}, we also have
\begin{equation}\label{genus-Kn,n,n}
\gamma(K_{mn, n, n}) =  \frac{(mn- 2)(n-1)}{2}
\end{equation}
for all positive integers $m$ and $n$.
The following is useful in our computation.
\begin{thm}\label{genus-Kab}
If $\Gamma = K_{a \cdot b}$, where $a\geq 3, b \geq 2$, then $\gamma(\Gamma) = \frac{a(a - 1)}{2}\left\lceil\frac{(b-2)^2}{4}\right
\rceil + \left\lceil \frac{(a-3)(a-4)}{12}\right
\rceil$.
\end{thm}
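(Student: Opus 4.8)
The two summands are precisely the genera of the natural constituents of $\Gamma=K_{a\cdot b}$. By~(\ref{genus-Km,n}) with $m=n=b$ we have $\gamma(K_{b,b})=\lceil (b-2)^2/4\rceil$, and by~(\ref{genus-Kn}) we have $\gamma(K_a)=\lceil (a-3)(a-4)/12\rceil$, so the assertion reads
\[
\gamma(\Gamma)=\binom{a}{2}\,\gamma(K_{b,b})+\gamma(K_a).
\]
Here the first term is contributed by the $\binom{a}{2}$ complete bipartite graphs $K_{b,b}$ spanning each pair of parts, and the second by the complete graph $K_a$ obtained when each part is contracted to a single vertex. The plan is to prove the two inequalities separately: an upper bound by exhibiting an embedding assembled from optimal embeddings of these constituents, and a matching lower bound from an Euler-characteristic count reinforced by a genus-additivity argument.

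For the upper bound I would argue by amalgamation. Take a minimum-genus embedding of $K_a$ on the orientable surface $S_{\gamma(K_a)}$ with vertices $v_1,\dots,v_a$, blow up each $v_i$ into an independent set $V_i$ of $b$ vertices lying in a disc neighbourhood of $v_i$, and along the band of each edge $\{v_i,v_j\}$ route the complete bipartite graph joining $V_i$ and $V_j$, at the cost of $\gamma(K_{b,b})$ extra handles, provided the rotation systems induced at the two ends of the band can be matched. Summing over the $\binom{a}{2}$ edges and keeping the $\gamma(K_a)$ handles already present would give an embedding of $\Gamma$ on a surface of the claimed genus. Equivalently, one can attempt to realise $\Gamma$ as an iterated Bouchet diamond sum of a single $K_a$ with $\binom{a}{2}$ copies of $K_{b,b}$ and invoke the subadditivity of genus under that operation.

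The lower bound carries the real content. The direct Euler estimate, using that $\Gamma$ has $|v(\Gamma)|=ab$ vertices and $|e(\Gamma)|=\tfrac12a(a-1)b^2$ edges together with the girth-$3$ face bound $3F\le 2|e(\Gamma)|$, gives only $\gamma(\Gamma)\ge 1-\tfrac12ab+\tfrac1{12}a(a-1)b^2$, which does not by itself yield the stated closed form. The decisive step, which I expect to be the main obstacle, is to show that an optimal embedding cannot economise by letting the $\binom{a}{2}$ bipartite blocks and the contracted $K_a$ share handles; that is, one must prove the genus of the amalgamation is genuinely additive, so that each $K_{b,b}$ forces its full $\gamma(K_{b,b})$ handles independently of the others. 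This is exactly the compatibility that the rotation-matching in the construction presupposes, and reconciling it with a matching non-sharing lower bound is where a complete proof would stand or fall.
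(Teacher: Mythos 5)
The paper does not actually prove this statement: its entire proof is the single line ``Follows from Theorem 4.1 of \cite{bn-2010}'', so there is no argument in the paper to compare yours against. Your proposal must therefore stand on its own, and the gap you yourself flag at the end is not a repairable technicality --- the step you identify as decisive is false.

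Your plan reduces the claim to the additivity
\[
\gamma(K_{a\cdot b})=\binom{a}{2}\,\gamma(K_{b,b})+\gamma(K_a),
\]
with an upper bound from an amalgamated embedding and a lower bound from showing that the $\binom{a}{2}$ bipartite blocks cannot share handles. This additivity fails. Take $a=b=3$: by \eqref{genus-Km,n} we have $\gamma(K_{3,3})=1$ and $\gamma(K_3)=0$, so your decomposition predicts $\gamma(K_{3,3,3})=3$, whereas White's formula \eqref{genus-Kn,n,n} --- quoted in this very paper and used again in the proof of Theorem \ref{gen} --- gives $\gamma(K_{3,3,3})=1$. An optimal embedding really does economise by letting the blocks share handles, so no non-sharing lower bound can exist. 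The construction half of your plan breaks down as well: for $b=2$ the formula asserts $\gamma(K_{a\cdot 2})=\gamma(K_a)$, but $K_{5\cdot 2}$ has $10$ vertices and $40$ edges, so Euler's formula forces $\gamma(K_{5\cdot 2})\ge \lceil 40/6-10/2+1\rceil=3>1=\gamma(K_5)$; hence the blown-up embedding of $K_5$ on the torus you would need cannot exist, and the rotation-matching proviso you mention is a genuine obstruction, not a detail. Since your decomposition over- and under-shoots the truth in these two cases, the discrepancy lies with the cited formula itself rather than with any fixable step of your argument; any honest treatment here would have to verify what \cite{bn-2010} actually establishes rather than attempt the additivity route.
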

\begin{proof}
Follows from Theorem 4.1 of \cite{bn-2010}.
\end{proof}

Using Theorem \ref{dihed}--Theorem \ref{bipar} and Theorem \ref{genus-Kab}, we get the following results. 

\begin{thm} \label{dihed-genus} 
If $t \geq 1, m \geq 3$, and $m$ is odd, then the genus of $\coeng(G)$, where $G=D_{2^{t+1}m}$ and  $Q_{2^{t+1}m}$,  
are given by 
\[
 \gamma(\coeng(G)) = \frac{m(m - 1)(2^{t-1}-1)^2}{2} + \left\lceil \frac{(m-3)(m-4)}{12}\right
\rceil.
\]
Further,  $\gamma(\coeng(D_{2m})) = \left\lceil \frac{(m-3)(m-4)}{12}\right
\rceil$.
\end{thm}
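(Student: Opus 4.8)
The plan is to combine the structural results of Section~3 with the genus formula of Theorem~\ref{genus-Kab}. By Theorem~\ref{dihed}, both $\coeng{D_{2^{t+1}m}}$ and $\coeng{Q_{2^{t+1}m}}$ are isomorphic to the complete multipartite graph $K_{m\cdot 2^t}$, while $\coeng{D_{2m}}\cong K_m$. Since the genus is an isomorphism invariant, it suffices to compute $\gamma(K_{m\cdot 2^t})$ and $\gamma(K_m)$ and then rewrite the answers in the stated form.

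For the first assertion I would apply Theorem~\ref{genus-Kab} with $a=m$ and $b=2^t$. The hypotheses are met: $a=m\geq 3$ by assumption, and $b=2^t\geq 2$ since $t\geq 1$. This yields
\[
\gamma(K_{m\cdot 2^t})=\frac{m(m-1)}{2}\left\lceil\frac{(2^t-2)^2}{4}\right\rceil+\left\lceil\frac{(m-3)(m-4)}{12}\right\rceil.
\]
The only point requiring attention is the simplification of the first ceiling. Because $2^t-2=2(2^{t-1}-1)$, we have $(2^t-2)^2/4=(2^{t-1}-1)^2$, which is already a non-negative integer; hence the ceiling is redundant and equals $(2^{t-1}-1)^2$. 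Substituting this gives exactly
\[
\gamma(\coeng{G})=\frac{m(m-1)(2^{t-1}-1)^2}{2}+\left\lceil\frac{(m-3)(m-4)}{12}\right\rceil,
\]
as required.

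For the final assertion, $\coeng{D_{2m}}\cong K_m$ with $m\geq 3$, so the genus formula for complete graphs in \eqref{genus-Kn} gives $\gamma(K_m)=\lceil (m-3)(m-4)/12\rceil$ directly. There is no genuine obstacle in this theorem beyond checking that the parameter ranges $a\geq 3$, $b\geq 2$ hold so that Theorem~\ref{genus-Kab} is applicable, and observing the integrality that collapses the inner ceiling; all the real work has already been carried out in Theorems~\ref{dihed} and~\ref{genus-Kab}.
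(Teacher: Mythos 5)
Your proposal is correct and follows essentially the same route as the paper: identify $\coeng{G}$ with $K_{m\cdot 2^t}$ (respectively $K_m$) via Theorem~\ref{dihed}, apply Theorem~\ref{genus-Kab} with $a=m$, $b=2^t$ (respectively equation~\eqref{genus-Kn}), and simplify $\bigl\lceil (2^t-2)^2/4\bigr\rceil=(2^{t-1}-1)^2$. In fact you spell out the integrality step collapsing the inner ceiling more explicitly than the paper does.
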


%\begin{proof}
%By Theorem \ref{dihed} we have $\coeng(G) \cong  K_{m \cdot 2^t}$, where $G = D_{2^{t+1}m}$ and $Q_{2^{t+1}m}$. Therefore, putting $a = m$ and $b = 2^t$ in Theorem \ref{genus-Kab}, we get
%\[
%\gamma(\coeng(G)) = \frac{m(m - 1)}{2}\left\lceil\frac{(2^t-2)^2}{4}\right
%\rceil + \left\lceil \frac{(m-3)(m-4)}{12}\right
%\rceil.
%\]
%Hence, the first part of the result follows. The second part follows from \eqref{genus-Kn} using the fact that  $\gamma(\coeng(D_{2m}))=\gamma(K_m)$.
%\end{proof}

\begin{thm} \label{pq-genus} 
If $F_{p, q}=\langle a,b \mid a^p=b^q=1, a^{-1}ba=b^r\rangle$, where $p$ and $q$ are primes, $q\equiv 1 \mod p$ and $r^p\equiv 1 \mod q$ then
\[
\gamma(\coeng(F_{p, q}))= \begin{cases}
\left\lceil \frac{(q-3)(q-4)}{12}\right
\rceil, &\text{ if } p = 2	\vspace{.2cm}\\

\frac{q(q - 1)}{2}\left\lceil\frac{(p-3)^2}{4}\right
\rceil + \left\lceil \frac{(q-3)(q-4)}{12}\right
\rceil, &\text{ if } p \geq 3.
\end{cases}
\]
\end{thm}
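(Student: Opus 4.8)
The plan is to follow the template set by the proof of Theorem~\ref{dihed-genus}: reduce the genus computation to a known complete-multipartite formula via the realization result of Section~3, then specialize. First I would invoke Theorem~\ref{pq}, which gives $\coeng{F_{p,q}}\cong K_{q\cdot(p-1)}$, a complete $q$-partite graph each of whose $q$ parts has size $p-1$. Thus the problem becomes the purely graph-theoretic one of evaluating $\gamma(K_{q\cdot(p-1)})$, and the two formulas at hand---Theorem~\ref{genus-Kab} for $K_{a\cdot b}$ and \eqref{genus-Kn} for $K_n$---will supply the answer once we determine which one applies.

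The computation then splits on the value of $p$. When $p=2$ each part has size $p-1=1$, so $K_{q\cdot 1}$ is simply the complete graph $K_q$; here the hypothesis $b\geq 2$ of Theorem~\ref{genus-Kab} fails, and I would instead read the genus off from \eqref{genus-Kn}, obtaining $\gamma(K_q)=\lceil (q-3)(q-4)/12\rceil$, which is the first branch. When $p\geq 3$ the part size is $b=p-1\geq 2$ and the number of parts is $a=q\geq 3$ (indeed, $q\equiv 1\pmod p$ with $q$ prime forces $q>p\geq 3$), so the hypotheses of Theorem~\ref{genus-Kab} are met. Substituting $a=q$ and $b=p-1$ and using $(b-2)^2=(p-3)^2$ produces the second branch directly.

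Because both cases are immediate substitutions into already-established formulas, there is no substantive obstacle; the only point demanding care is the degenerate case $p=2$, where the part size collapses to $1$ and Theorem~\ref{genus-Kab} must be swapped for the complete-graph formula \eqref{genus-Kn}. One should also record that, since $q\equiv 1\pmod 2$, the prime $q$ is odd and hence $q\geq 3$, which guarantees that \eqref{genus-Kn} is valid in this case as well (and is consistent with the fact that $F_{2,q}\cong D_{2q}$, already covered by Theorem~\ref{dihed-genus}).
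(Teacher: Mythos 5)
Your proposal matches the paper's own proof essentially verbatim: both invoke Theorem~\ref{pq} to identify $\coeng{F_{p,q}}\cong K_{q\cdot(p-1)}$, handle $p=2$ via $K_{q\cdot 1}=K_q$ and \eqref{genus-Kn}, and handle $p\geq 3$ by substituting $a=q$, $b=p-1$ into Theorem~\ref{genus-Kab}. Your extra checks that the hypotheses $a\geq 3$, $b\geq 2$, $q\geq 3$ are satisfied are a small but welcome addition of care beyond what the paper records.
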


%\begin{proof}
%By Theorem \ref{pq} we have $\coeng(F_{p, q}) \cong K_{q \cdot (p-1)}$. If $p = 2$ then $\coeng(F_{p, q}) \cong K_q$ noting that $K_{q \cdot 1} = K_q$. Therefore, the result follows from \eqref{genus-Kn}.
%
%
% If $p \geq 3$ then, putting $a = q$ and $b = p - 1$ in Theorem \ref{genus-Kab}, we get
%\[
%\gamma(\coeng(F_{p, q})) = \frac{q(q - 1)}{2}\left\lceil\frac{((p-1)-2)^2}{4}\right
%\rceil + \left\lceil \frac{(q-3)(q-4)}{12}\right
%\rceil.
%\]
%Hence, the result follows. 	
%\end{proof}

\begin{thm}\label{bipar-genus} 
Let $G$ be a finite non-Engel group such that $\coeng(G) \cong K_{m \cdot n}$. Let $H$ be a finite Engel group with $|H|=l$ and $X = H \times G$. Then  
\[
\gamma(\coeng(X)) =  \frac{lm(lm - 1)}{2}\left\lceil\frac{(n-2)^2}{4}\right
\rceil + \left\lceil \frac{(lm-3)(lm-4)}{12}\right
\rceil.
\]
\end{thm}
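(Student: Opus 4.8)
The plan is to reduce the claim to the genus formula for balanced complete multipartite graphs, Theorem~\ref{genus-Kab}, after first identifying the graph $\coeng{X}$ itself. The whole argument is therefore a two-step application of results already in hand, and contains no genuinely new computation.

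First I would invoke Theorem~\ref{bipar} with the given data: since $H$ is a finite Engel group of order $l$ and $\coeng{G} \cong K_{m \cdot n}$, that theorem yields at once $\coeng{X} = \coeng{H \times G} \cong K_{lm \cdot n}$. I would emphasise that this step also settles the vertex set, because Theorem~\ref{bipar} establishes $L(H \times G) = H \times L(G)$, so that $v(\coeng{X}) = H \times (G \setminus L(G))$, with partite sets $H \times G_1, \dots, H \times G_m$ where $G_1, \dots, G_m$ are the parts of $\coeng{G}$. The effect of tensoring with an Engel group is thus merely to multiply the number of parts by $l$ while keeping each part of size $n$. Second, I would feed $K_{lm \cdot n}$ into Theorem~\ref{genus-Kab} with $a = lm$ and $b = n$; substituting these values into $\gamma(K_{a \cdot b}) = \frac{a(a-1)}{2}\left\lceil\frac{(b-2)^2}{4}\right\rceil + \left\lceil\frac{(a-3)(a-4)}{12}\right\rceil$ produces exactly the displayed expression, and the proof is complete.

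The only point requiring care — and hence the main, if modest, obstacle — is to check that the hypotheses of Theorem~\ref{genus-Kab}, namely $a \geq 3$ and $b \geq 2$, are in force, i.e.\ that $lm \geq 3$ and $n \geq 2$. Since $G$ is non-Engel, $\coeng{G}$ carries at least one edge and so $m \geq 2$; for the groups to which this theorem is applied one in fact has the stronger $m \geq 3$ (e.g.\ $m \geq 3$ odd for $D_{2^{t+1}m}$ and $Q_{2^{t+1}m}$, and the prime $q \geq 3$ playing the role of $m$ for $F_{p,q}$), whence $a = lm \geq 3$. Likewise $b = n \geq 2$ holds in the relevant cases ($n = 2^t \geq 2$ with $t \geq 1$, and $n = p - 1 \geq 2$ with $p \geq 3$); note that the stated formula already presupposes $n \geq 2$, since for $n = 1$ the graph $K_{lm \cdot 1}$ degenerates to the complete graph $K_{lm}$ and its genus is governed by \eqref{genus-Kn} alone. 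I would therefore record $n \geq 2$ and $lm \geq 3$ as the standing assumptions before citing Theorem~\ref{genus-Kab}; with these in place the substitution is immediate and the rest of the argument is purely formal.
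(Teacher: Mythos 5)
Your proof is correct and follows exactly the paper's own argument: apply Theorem~\ref{bipar} to identify $\coeng{X}\cong K_{lm\cdot n}$, then substitute $a=lm$, $b=n$ into Theorem~\ref{genus-Kab}. Your added check that the hypotheses $lm\ge 3$ and $n\ge 2$ of Theorem~\ref{genus-Kab} are satisfied is a point the paper passes over silently, and is worth recording.
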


%\begin{proof}
%By Theorem \ref{bipar} we have $\coeng(X) \cong K_{lm \cdot n}$.  Hence, the result follows from Theorem \ref{genus-Kab}, putting $a = lm$ and $b = n$.
%\end{proof}
 
\begin{cor} 
Let $G$ be a finite non-Engel group such that $\coeng(G) \cong K_{m}$. Let $H$ be a finite Engel group with $|H|=l$  and $X = H \times G$. Then $\gamma(\coeng(X)) = \left\lceil \frac{(lm-3)(lm-4)}{12}\right\rceil.$
\end{cor}
%\begin{proof}
%Note that $K_m\cong K_{m \cdot 1}$. By Theorem \ref{bipar}, we have $\coeng(G) \cong K_{lm \cdot 1}  \cong K_{lm}$. Hence, the result follows from \eqref{genus-Kn}.
%\end{proof}

\begin{cor} \label{dihede}
Let $H$ be a finite Engel group. Let $t$ and $m$ be positive integers with $t\geq 1$, $m\geq 3$ and $m$ is odd. Then 
$$
\coeng(H\times D_{2^{t+1}m)} \cong \coeng( H\times Q_{2^{t+1}m}).
$$	
Let $|H| = l$ and $K=D_{2^{t+1}m}$ or $Q_{2^{t+1}m}$. Then $\coeng(H\times K) \cong K_{lm \cdot 2^t}$ and  $\coeng(H\times D_{2m}) \cong K_{lm}$. 
Further,
\[
\gamma(\coeng(H\times K))
= \frac{lm(lm - 1)(2^{t-1}-1)^2}{2} + \left\lceil \frac{(lm-3)(lm-4)}{12}\right
\rceil
\]
and $\gamma(\coeng(H\times D_{2m})) = \left\lceil \frac{(lm-3)(lm-4)}{12}\right\rceil$.
\end{cor}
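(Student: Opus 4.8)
The plan is to derive every assertion directly from the structural and genus results already established for $K_{a\cdot b}$, treating the corollary as a specialisation of Theorems~\ref{dihed}, \ref{bipar}, and \ref{bipar-genus}.

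First I would establish the isomorphism $\coeng{H\times D_{2^{t+1}m}}\cong\coeng{H\times Q_{2^{t+1}m}}$. By Theorem~\ref{dihed}, both $\coeng{D_{2^{t+1}m}}$ and $\coeng{Q_{2^{t+1}m}}$ are isomorphic to $K_{m\cdot 2^t}$. Applying Theorem~\ref{bipar} with $n=2^t$ to each factor shows that both products realise $K_{lm\cdot 2^t}$, so they are isomorphic to one another. The same application of Theorem~\ref{bipar} yields $\coeng{H\times K}\cong K_{lm\cdot 2^t}$ for $K=D_{2^{t+1}m}$ or $Q_{2^{t+1}m}$. For the remaining realisation I would note that Theorem~\ref{dihed} gives $\coeng{D_{2m}}\cong K_m=K_{m\cdot 1}$, so Theorem~\ref{bipar} with $n=1$ produces $\coeng{H\times D_{2m}}\cong K_{lm\cdot 1}=K_{lm}$.

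For the genus I would invoke Theorem~\ref{bipar-genus} with $n=2^t$ to obtain
\[
\gamma(\coeng{H\times K})=\frac{lm(lm-1)}{2}\left\lceil\frac{(2^t-2)^2}{4}\right\rceil+\left\lceil\frac{(lm-3)(lm-4)}{12}\right\rceil.
\]
The only point requiring care is the simplification of the first ceiling: since $2^t-2=2(2^{t-1}-1)$ is even, we have $(2^t-2)^2=4(2^{t-1}-1)^2$, so $(2^t-2)^2/4=(2^{t-1}-1)^2$ is already an integer and the ceiling may be dropped. Substituting this collapsed value yields the claimed formula. The genus of $\coeng{H\times D_{2m}}$ then follows from $\coeng{H\times D_{2m}}\cong K_{lm}$ together with~\eqref{genus-Kn}.

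Before applying Theorem~\ref{bipar-genus} I would verify that its hypotheses $a\geq 3$, $b\geq 2$ hold with $a=lm$ and $b=2^t$: since $m\geq 3$ and $l\geq 1$ we have $lm\geq 3$, and $t\geq 1$ gives $2^t\geq 2$. I do not expect any genuine obstacle here; the corollary is essentially bookkeeping built on the earlier theorems, and the only non-routine move is recognising that $\lceil(2^t-2)^2/4\rceil$ reduces to the perfect square $(2^{t-1}-1)^2$.
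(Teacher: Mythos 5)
Your proposal is correct and follows essentially the same route as the paper: Theorem~\ref{dihed} plus Theorem~\ref{bipar} for the realisations, then the genus formula for $K_{a\cdot b}$ (your Theorem~\ref{bipar-genus} is just Theorem~\ref{genus-Kab} specialised to $a=lm$, $b=n$, which is what the paper uses) together with the observation that $\lceil(2^t-2)^2/4\rceil=(2^{t-1}-1)^2$, and \eqref{genus-Kn} for the $D_{2m}$ case. Your explicit check of the hypotheses $lm\geq 3$, $2^t\geq 2$ is a small point of extra care not spelled out in the paper.
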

%\begin{proof} 
%By Theorem \ref{dihed} we have $\coeng(G) \cong  K_{m \cdot 2^t}$ if $G = D_{2^{t+1}m}$ or $Q_{2^{t+1}m}$,  and $\coeng(D_{2m}) \cong K_{m}$. In view of this, it is easy to see that 
%$$
%\coeng(H\times D_{2^{t+1}m}) \cong \coeng(H\times Q_{2^{t+1}m}).
%$$	
%
%If $|H| = l$ and $K=D_{2^{t+1}m}$ or $Q_{2^{t+1}m}$ then, by Theorem \ref{bipar}, $\coeng(H\times K) \cong K_(lm \cdot 2^t)$ and  $\coeng(H\times D_{2m}) \cong K_{lm}$. By Theorem \ref{genus-Kab} we have    
%	\[
%	\gamma(\coeng(H\times K))
%	= \frac{lm(lm - 1)}{2}\left\lceil\frac{(2^t-2)^2}{4}\right
%	\rceil + \left\lceil \frac{(lm-3)(lm-4)}{12}\right
%	\rceil.
%	\]
%Hence we get the required expression for $\gamma(\coeng(H\times K))$. The expression for $\gamma(\coeng(H\times D_{2m}))$ can be obtained from \eqref{genus-Kn}.	
%\end{proof}

\begin{cor} \label{pqcor} 
Let $F_{p, q} =\langle a,b \mid a^p=b^q=1, a^{-1}ba=b^r\rangle$ be a group of order $pq$, where $p$ and $q$ are primes such that $q\equiv 1 \mod p$ and $r^p\equiv 1 \mod q$. Let $H$ be a finite Engel group with $|H|=l$. Then $\coeng(H\times F_{p, q}) \cong K_{lq \cdot (p-1)}$.
Further, 
\[
\gamma(\coeng(H\times F_{p, q})) =  \frac{lq(lq - 1)}{2}\left\lceil\frac{(p-3)^2}{4}\right
\rceil + \left\lceil \frac{(lq-3)(lq-4)}{12}\right
\rceil. 
\]
\end{cor}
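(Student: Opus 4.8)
The plan is to obtain both assertions as direct specialisations of the machinery already in place, so that essentially no new computation is required. First I would record that $F_{p,q}$ is a finite non-Engel group: it is non-nilpotent since $\Fit(F_{p,q})=\langle b\rangle$ has index $p>1$, as was observed in the proof of Theorem~\ref{pq}. By Theorem~\ref{pq} we have $\coeng{F_{p,q}}\cong K_{q\cdot(p-1)}$, which is exactly of the form $K_{m\cdot n}$ with $m=q$ and $n=p-1$. Since $H$ is a finite Engel group with $|H|=l$, Theorem~\ref{bipar} applies verbatim and yields $\coeng{H\times F_{p,q}}\cong K_{lm\cdot n}=K_{lq\cdot(p-1)}$, which is the first claim.

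For the genus I would simply feed this realisation into Theorem~\ref{genus-Kab}. Writing $\Gamma=\coeng{H\times F_{p,q}}\cong K_{a\cdot b}$ with $a=lq$ and $b=p-1$, I need the hypotheses $a\geq 3$ and $b\geq 2$. The first holds unconditionally: $q$ is a prime with $q\equiv 1\pmod p$, which forces $q\geq 3$, and $l\geq 1$, so $a=lq\geq 3$. The second, $b=p-1\geq 2$, holds precisely when $p\geq 3$. Granting $p\geq 3$, substituting $a=lq$ and $b=p-1$ into the formula of Theorem~\ref{genus-Kab} and simplifying $(b-2)^2=((p-1)-2)^2=(p-3)^2$ gives exactly
\[
\gamma(\coeng{H\times F_{p,q}})=\frac{lq(lq-1)}{2}\left\lceil\frac{(p-3)^2}{4}\right\rceil+\left\lceil\frac{(lq-3)(lq-4)}{12}\right\rceil,
\]
as required.

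The only point that genuinely needs attention is the degenerate case $p=2$, where Theorem~\ref{genus-Kab} does not apply because then $b=p-1=1<2$. In that situation $F_{2,q}\cong D_{2q}$, so $\coeng{H\times F_{2,q}}\cong K_{lq\cdot 1}=K_{lq}$ is a complete graph whose genus is $\lceil(lq-3)(lq-4)/12\rceil$ by \eqref{genus-Kn}, consistent with the preceding corollary on $\coeng{H\times D_{2m}}$. Thus the displayed formula is to be read as holding for $p\geq 3$, with the $p=2$ instance subsumed by the complete-graph case. Beyond this case split there is no real obstacle: the entire content of the proof is the correct identification of the parameters $a=lq$ and $b=p-1$ together with the verification of the side conditions, so I would keep the argument to a couple of lines citing Theorems~\ref{pq}, \ref{bipar} and~\ref{genus-Kab}.
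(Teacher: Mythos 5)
Your proposal is correct and follows essentially the same route as the paper: Theorem~\ref{pq} gives $\coeng{F_{p,q}}\cong K_{q\cdot(p-1)}$, Theorem~\ref{bipar} upgrades this to $K_{lq\cdot(p-1)}$, and Theorem~\ref{genus-Kab} with $a=lq$, $b=p-1$ yields the genus formula. Your extra attention to the degenerate case $p=2$ (where $b=p-1=1$ falls outside the hypotheses of Theorem~\ref{genus-Kab} and the displayed formula must be replaced by the complete-graph genus) is a point the paper's own proof silently omits, and is consistent with the case split the authors do make in Theorem~\ref{pq-genus}.
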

%\begin{proof}
%By Theorem \ref{pq} we have $\coeng(F_{p, q}) = K_{q \cdot (p - 1)}$. In view of this and Theorem \ref{bipar}, it follows that 
%$\coeng(H\times F_{p, q}) \cong K_{lq \cdot (p-1)}$.
%
%Putting $a = lq$ and $b = p - 1$ in Theorem \ref{genus-Kab}, we have
%\[
%\gamma(\coeng(H\times F_{p, q}))  = \frac{lq(lq - 1)}{2}\left\lceil\frac{((p-1)-2)^2}{4}\right\rceil + \left\lceil \frac{(lq-3)(lq-4)}{12}\right\rceil.
%\]
%Hence, we get the required expression for $\gamma(\coeng(H\times F_{p, q}))$.
%\end{proof}

The remaining part of this section is devoted to the characterization of finite groups through the genus of $\coeng(G)$. We first recall the following characterization of finite groups obtained by Abdollahi \cite{aa}.
\begin{thm}\label{Ab-Plannar}[\cite{aa}, Theorem 3.1]
Let $G$ be a finite non-Engel group. Then $\coeng(G)$ is planar if and only if $G\cong D_6,D_{12}$ or $Q_{12}$.
\end{thm}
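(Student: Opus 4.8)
The plan is to prove the two implications separately, with the reverse one essentially free from the realizations already in hand.

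For the ``if'' direction: writing $D_6 = D_{2\cdot 3}$, $D_{12} = D_{2^{1+1}\cdot 3}$ and $Q_{12} = Q_{2^{1+1}\cdot 3}$, Theorem~\ref{dihed} gives $\coeng{D_6}\cong K_3$ and $\coeng{D_{12}}\cong\coeng{Q_{12}}\cong K_{3\cdot 2}$. Since $K_3$ and $K_{3\cdot 2}=K_{2,2,2}$ (the octahedron) are both planar, each of the three groups has planar co-Engel graph, and this direction is immediate.

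For the ``only if'' direction, suppose $\coeng{G}$ is planar. By Kuratowski's theorem $\coeng{G}$ contains no subdivision of $K_5$ or $K_{3,3}$; in particular its clique number is at most $4$. I would feed this into the structural facts recalled in Section~2: a clique number of at most $15$ (hence, a fortiori, at most $4$) forces $G$ to be soluble, and $[G:\Fit(G)]$ is bounded by a function of the clique number. Write $F=\Fit(G)=L(G)$, so the vertex set of $\coeng{G}$ is $G\setminus F$.

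The heart of the argument is to bound the number of vertices $n=|G\setminus F|$, thereby reducing to finitely many candidate groups. For this I would combine Euler's inequality $e(\coeng{G})\le 3n-6$ with a lower bound on the edge count coming from the coset structure of $G$ modulo $F$: as in the computations of Theorems~\ref{dihed} and~\ref{pq}, the non-Engel elements split into ``Engel classes'' so that elements of different classes are adjacent, whence $\coeng{G}$ contains a complete multipartite subgraph $K_{a\cdot b}$ whose parts are these classes. Since the only planar complete multipartite graphs $K_{a\cdot b}$ are $K_m$ with $m\le 4$ together with $K_{2,2}$ and $K_{2,2,2}$ (because $K_5\subseteq K_{5\cdot1}$, $K_{3,3}\subseteq K_{3\cdot3}$ and $K_{3,3}\subseteq K_{4\cdot2}$ are all non-planar), planarity bounds both the number of classes and the size of each class. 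This bounds $n$, and since $n=|F|\,([G:F]-1)$ with $[G:F]\ge 2$, it bounds $|F|$, the index $[G:F]$, and hence $|G|$. I would then enumerate the resulting finite list of soluble groups of bounded order, compute $\coeng{G}$ for each, and discard the non-planar ones by exhibiting a $K_5$ or $K_{3,3}$; the survivors are exactly $D_6$, $D_{12}$ and $Q_{12}$ (smaller planar patterns such as $K_4$ or $K_{2,2}$ do not survive, since the groups that would produce them are nilpotent, hence Engel, or do not exist).

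The main obstacle is the density step, namely justifying the ``Engel class'' partition. In general, non-adjacency in $\coeng{G}$ need not be transitive, so $\coeng{G}$ need not be complete multipartite, and for such groups the clean comparison between vertex and edge counts is unavailable. For these one must instead extract a $K_5$ or $K_{3,3}$ subdivision directly from the coset and centralizer structure of the soluble group $G$, and this is where the bulk of the case analysis lies.
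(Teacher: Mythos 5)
This statement is quoted verbatim from Abdollahi (\cite{aa}, Theorem 3.1); the paper gives no proof of it, so your attempt can only be judged on its own merits. Your ``if'' direction is fine: Theorem~\ref{dihed} gives $\coeng{D_6}\cong K_3$ and $\coeng{D_{12}}\cong\coeng{Q_{12}}\cong K_{3\cdot2}$, both planar. The ``only if'' direction, however, contains a genuine gap that you yourself flag in your last paragraph. Your vertex/edge count hinges on the non-Engel elements splitting into ``Engel classes'' with adjacency exactly between distinct classes, i.e.\ on $\coeng{G}$ being complete multipartite. Non-adjacency in $\coeng{G}$ is not transitive in general (already $\coeng{A_4}$, Figure~\ref{figA4}, is not complete multipartite), so this structure is simply unavailable for an arbitrary soluble non-Engel $G$, and your fallback --- ``extract a $K_5$ or $K_{3,3}$ subdivision directly from the coset and centralizer structure'' --- is precisely the content of the theorem and is nowhere carried out. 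Deferring ``the bulk of the case analysis'' means the only-if direction is not proved. Even in the complete multipartite case, the concluding step (``enumerate the resulting finite list \dots\ the survivors are exactly $D_6$, $D_{12}$ and $Q_{12}$'') and the dismissal of the patterns $K_4$ and $K_{2,2}$ are asserted rather than argued.

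A complete argument along the lines the paper actually uses for the analogous toroidal and projective classifications (Theorem~\ref{gen} and the theorem following it) would run as follows, and avoids both the Euler-formula density count and the mass enumeration. Planarity forces $\omega(\coeng{G})\le 4$, so by \cite[Proposition~1.4]{aa1} and \cite[Theorem~1.2]{aa} one has $G/Z^*(G)\cong D_6$ or $A_4$. If $G/Z^*(G)\cong A_4$, then since $\coeng{A_4}$ contains $K_{4,4}$, Lemma~\ref{ad} shows $\coeng{G}$ contains $K_{4,4}\supseteq K_{3,3}$, contradicting planarity. If $G/Z^*(G)\cong D_6$, Lemma~\ref{ad} shows $K_{m,m,m}$ is a subgraph of $\coeng{G}$ with $m=|Z^*(G)|$; planarity forces $m\le 2$ (as $K_{3,3,3}\supseteq K_{3,3}$), whence $G$ is a non-nilpotent group of order $6$ or a non-nilpotent group of order $12$ with $|Z^*(G)|=2$, i.e.\ $G\cong D_6$, $D_{12}$ or $Q_{12}$. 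You should either supply an argument of this kind or cite \cite{aa} for the only-if direction, as the paper itself does.
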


In the next two results  we characterize the groups $G=D_{2^{t+1}m}$, $Q_{2^{t+1}m}$, $D_{2m}$  (where $t \geq 1$, $m \geq 3$ and $m$ is odd) or $F_{p,q}$ such that the  graphs $\coeng(G)$ are planar, toroidal, double-toroidal and triple-toroidal.
\begin{thm}
If $G$  is isomorphic to  the group $D_{2m}$, where $m \geq 3$ and $m$ is odd, then
\begin{enumerate}
\item $\coeng(G)$ is planar if and only if $m = 3$.
\item $\coeng(G)$ is toroidal if and only if $m = 5, 7$.
\item $\coeng(G)$ is not double-toroidal.
\item $\coeng(G)$ is triple-toroidal if and only if $m = 9$.
\item  $\gamma(\coeng(G)) \geq 5$ for $m \geq 11$.
\end{enumerate} 
\end{thm}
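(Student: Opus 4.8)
The plan is to reduce the entire statement to a computation with the genus of a complete graph. By Theorem~\ref{dihed} we have $\coeng{D_{2m}}\cong K_m$ whenever $m\geq 3$ is odd, so $\gamma(\coeng{G})=\gamma(K_m)$, and \eqref{genus-Kn} evaluates this as $f(m):=\left\lceil\frac{(m-3)(m-4)}{12}\right\rceil$. Every one of the five claims is therefore a statement about the integer-valued function $f$ restricted to odd integers $m\geq 3$, and I would prove them by combining a handful of direct evaluations with a monotonicity argument.

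First I would record the small values $f(3)=0$, $f(5)=\left\lceil\tfrac16\right\rceil=1$, $f(7)=\left\lceil\tfrac{12}{12}\right\rceil=1$, $f(9)=\left\lceil\tfrac{30}{12}\right\rceil=3$, and $f(11)=\left\lceil\tfrac{56}{12}\right\rceil=5$. To promote these into the ``if and only if'' assertions of (a), (b), (d) and the bound in (e), I would note that $(m-3)(m-4)=m^2-7m+12$ has derivative $2m-7>0$ for $m\geq 4$, hence is strictly increasing there, so $f$ is non-decreasing. It follows that $f(m)\geq f(11)=5$ for all $m\geq 11$, which gives (e) and simultaneously guarantees that the genera $0,1,3$ occur only at the small values already listed, supplying the missing ``only if'' directions.

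The one point requiring a little care is (c), the assertion that genus~$2$ never occurs. Here I would argue that $f(m)=2$ forces $1<\frac{(m-3)(m-4)}{12}\leq 2$, i.e.\ $12<(m-3)(m-4)\leq 24$; since $(m-3)(m-4)$ equals $12$ at $m=7$, $20$ at $m=8$ and $30$ at $m=9$ and is increasing, the only integer solution is $m=8$, which is \emph{even} and therefore outside our family of odd moduli. Thus $f$ jumps straight from $1$ (at $m=7$) to $3$ (at $m=9$) on odd arguments, skipping~$2$, so $\coeng{D_{2m}}$ is never double-toroidal.

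I expect this parity gap to be the only genuinely structural step; everything else is a finite check at $m\in\{3,5,7,9,11\}$ together with monotonicity of the ceiling. No separate embedding constructions are needed, since all the content is already packaged in the formula \eqref{genus-Kn} for $\gamma(K_n)$ and the reduction of Theorem~\ref{dihed}.
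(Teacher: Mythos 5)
Your proposal is correct and follows essentially the same route as the paper: reduce to $\gamma(K_m)$ via Theorem~\ref{dihed} and the formula \eqref{genus-Kn}, evaluate at $m=3,5,7,9,11$, and invoke monotonicity of $(m-3)(m-4)$ for $m\geq 11$. Your explicit treatment of the monotonicity and of the parity gap in part (c) (genus $2$ would require $m=8$, which is even) is slightly more careful than the paper's write-up, but the argument is the same.
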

\begin{proof}
If $G$ is isomorphic to $D_{2m}$, $m \geq 3$ and $m$ is odd, then
by Theorem \ref{dihed-genus} we have $\gamma(\coeng(G)) = \left\lceil \frac{(m-3)(m-4)}{12}\right
\rceil$. If $m = 3$ then we have $\frac{(m-3)(m-4)}{12} = 0$. Therefore, $\gamma(\coeng(G)) = 0$. If $m= 5$ and $7$ then $\frac{(m-3)(m-4)}{12} = \frac{1}{6}$ and $1$ respectively. Therefore, $\gamma(\coeng(G)) = 1$.  If $m = 9$ then $\frac{(m-3)(m-4)}{12} = 2.5$. Therefore, $\gamma(\coeng(G)) = 3$. If  $m \geq 11$ then $\frac{(m-3)(m-4)}{12} = 4 + \frac{2}{3}$. Therefore, $\gamma(\coeng(G)) \geq 5$. Hence, the result follows.
\end{proof}

\begin{thm}
If $G$ is isomorphic to  the group $D_{2^{t+1}m}$ or $Q_{2^{t+1}m}$,  where $t \geq 1$ and $m \geq 3$ is odd, then
\begin{enumerate}
\item $\coeng(G)$ is planar if and only if $t = 1$ and $m = 3$.
\item $\coeng(G)$ is toroidal if and only if $t = 1$ and $m = 5, 7$.
\item $\coeng(G)$ is not double-toroidal.
\item $\coeng(G)$ is triple-toroidal if and only if $t = 1$, $m = 9$ and $t = 2$, $m = 3$.
\item  $\gamma(\coeng(G)) \geq 5$ for any other values of $t$ and $m$.
\end{enumerate} 
\end{thm}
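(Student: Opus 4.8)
The plan is to read everything off the closed-form genus formula already established in Theorem~\ref{dihed-genus}, namely
\[
\gamma(\coeng{G}) = \frac{m(m-1)(2^{t-1}-1)^2}{2} + \left\lceil\frac{(m-3)(m-4)}{12}\right\rceil,
\]
and to determine for which pairs $(t,m)$ this value lands in $\{0,1,2,3\}$ as opposed to being at least $5$. The decisive structural observation is that the first summand carries the factor $(2^{t-1}-1)^2$, which equals $0$ exactly when $t=1$ and is at least $1$ for every $t\geq2$; since both summands are manifestly non-negative and each is monotone increasing in $t$ and in $m$, this factor cleanly splits the problem into the two ranges $t=1$ and $t\geq2$, after which only a finite list of small cases must be checked by arithmetic.

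First I would treat the case $t=1$. Here the first summand vanishes, so $\gamma(\coeng{G})=\left\lceil (m-3)(m-4)/12\right\rceil$, which is precisely the value computed for $D_{2m}$ in the preceding theorem. I would therefore simply invoke that computation to conclude that the genus equals $0$ for $m=3$, equals $1$ for $m=5,7$, equals $3$ for $m=9$, and is at least $5$ for $m\geq11$ (at $m=11$ one has $(8)(7)/12>4$, so the ceiling is $5$). In particular the value $2$ is skipped throughout this range, which already supplies the $t=1$ part of statements (a)--(e), including the non-occurrence of double-toroidal graphs.

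Next I would handle $t\geq2$ by bounding the first summand from below. Since $m\geq3$ forces $m(m-1)/2\geq3$ and $(2^{t-1}-1)^2\geq1$, the first summand is at least $3$; I would then verify that it equals exactly $3$ only in the single case $t=2,\,m=3$, where moreover the ceiling term vanishes, giving $\gamma(\coeng{G})=3$. In every remaining case with $t\geq2$ the first summand already exceeds $4$: for $t=2$ with $m\geq5$ it is at least $\tfrac{5\cdot4}{2}=10$, and for $t\geq3$ with $m\geq3$ it is at least $\tfrac{6\cdot9}{2}=27$, so the genus is at least $5$. Combining the two cases yields exactly the classification claimed: planar iff $(t,m)=(1,3)$; toroidal iff $(t,m)=(1,5),(1,7)$; never double-toroidal; triple-toroidal iff $(t,m)=(1,9)$ or $(2,3)$; and genus at least $5$ otherwise.

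The argument is not conceptually deep once the formula is in hand; the only real obstacle is careful bookkeeping at the two boundaries where the genus jumps. Specifically, one must confirm that around $m=9$ with $t=1$, and at $t=2,\,m=3$, the genus is genuinely $3$ rather than $2$ or $4$, and that nothing in either range produces the value $2$, so that part~(c) holds. Because both summands are monotone in $t$ and $m$, these checks reduce to evaluating the formula on the explicit finite set of small pairs listed above, each of which is a routine numerical verification.
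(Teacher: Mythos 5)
Your proposal is correct and follows essentially the same route as the paper: both apply the closed-form genus formula from Theorem~\ref{dihed-genus} and split into the cases $t=1$, $t=2$, and $t\geq3$, checking the small values of $m$ in each range and using the lower bound $\frac{m(m-1)(2^{t-1}-1)^2}{2}\geq 27$ for $t\geq3$. Your arithmetic at the boundary cases $(1,9)$ and $(2,3)$, and the observation that the value $2$ is never attained, all match the paper's computations.
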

\begin{proof}
If $G$ is isomorphic to  $D_{2^{t+1}m}$ or $Q_{2^{t+1}m}$, where $t \geq 1$, $m \geq 3$ and $m$ is odd, then
by Theorem \ref{dihed-genus} we have
\[
\gamma(\coeng(G))  = \frac{m(m - 1)(2^{t-1}-1)^2}{2} + \left\lceil \frac{(m-3)(m-4)}{12}\right
\rceil.
\]
We consider the following cases.

\noindent \textbf{Case 1.} $t = 1$ 	

In this case  we have $(2^{t-1}-1)^2 = 0$  and so  $\frac{m(m - 1)(2^{t-1}-1)^2}{2} = 0$. If  $m = 3$ then  $(m - 3)(m - 4) =0$. Therefore,  $\gamma(\coeng(G)) = 0$. If $m = 5, 7$ then $\left\lceil \frac{(m-3)(m-4)}{12}\right\rceil = 1$.    Therefore,  $\gamma(\coeng(G)) = 1$. If $m = 9$ then $\left\lceil \frac{(m-3)(m-4)}{12}\right\rceil = 3$.    Therefore,  $\gamma(\coeng(G)) = 3$. If $m \geq 11$ then $\left\lceil \frac{(m-3)(m-4)}{12}\right\rceil \geq 5$ and so  $\gamma(\coeng(G)) \geq 5$.

\noindent \textbf{Case 2.} $t = 2$ 	

In this case  we have $(2^{t-1}-1)^2 = 1$. If $m = 3$ then $\frac{m(m - 1)(2^{t-1}-1)^2}{2} = 3$ and $(m - 3)(m - 4) =0$. Therefore,  $\gamma(\coeng(G)) = 3$. If $ m \geq 5$ then $\frac{m(m - 1)(2^{t-1}-1)^2}{2} \geq 10$ and  $\left\lceil \frac{(m-3)(m-4)}{12}\right\rceil \geq 1$. Therefore, $\gamma(\coeng(G)) \geq 11$.

\noindent \textbf{Case 3.} $t \geq 3$ 	

In this case $(2^{t-1}-1)^2 \geq 9$ and $m(m-1) \geq 6$. Therefore, $\frac{m(m - 1)(2^{t-1}-1)^2}{2} \geq 27$. Hence, $\gamma(\coeng(G)) \geq 27$ since  $\left\lceil \frac{(m-3)(m-4)}{12}\right\rceil \geq 0$.
Thus we get the required result.
\end{proof}

\begin{thm}
If $G$ is isomorphic to the group $F_{p, q}$ then 
\begin{enumerate}
\item $\coeng(G)$ is planar if and only if $p = 2$ and $q = 3$.
\item $\coeng(G)$ is toroidal if and only if $p = 2$ and $q = 5, 7$; $p = 3$ and $q = 7$.
\item $\gamma(\coeng(G)) \geq 5$ for any other values of $p$ and $q$. Thus, $\coeng(G)$ is neither double-toroidal nor  triple-toroidal.
\end{enumerate} 
\end{thm}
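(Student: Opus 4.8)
The plan is to read off everything from the closed-form genus formula established in Theorem~\ref{pq-genus}, and to organise the verification according to the value of $p$. The only subtlety is arithmetic: because $q$ must be a prime with $q\equiv 1\pmod p$, not every integer value of $q$ is admissible, and this is precisely what creates the gap in the genus sequence that yields part~(c).

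First I would dispose of the case $p=2$. Here Theorem~\ref{pq-genus} gives $\gamma(\coeng{F_{p,q}})=\lceil (q-3)(q-4)/12\rceil$, the same expression as for $D_{2m}$ with $m=q$. Evaluating at the admissible odd primes $q=3,5,7$ yields $0,1,1$ respectively, which account for the planar case and two of the toroidal cases. The function $(q-3)(q-4)/12$ is increasing for $q\ge 4$, and the next admissible prime is $q=11$ (the values $q=8,9,10$ being excluded as non-prime), where the quantity already exceeds $4$, so its ceiling is $\ge 5$; hence $\gamma(\coeng{F_{p,q}})\ge 5$ for every admissible $q\ge 11$.

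Next I would treat $p=3$. Since $(p-3)^2=0$, the first summand in the formula vanishes and again $\gamma(\coeng{F_{p,q}})=\lceil (q-3)(q-4)/12\rceil$. The admissible primes are those with $q\equiv 1\pmod 3$, the smallest being $q=7$, which gives $\gamma=1$ (the remaining toroidal case). The next admissible prime is $q=13$, where the value is $90/12=7.5$, so $\gamma=8\ge 5$, and larger $q$ only increase this. Finally, for $p\ge 5$ I would note that $\lceil (p-3)^2/4\rceil\ge 1$, while the arithmetic constraint forces $q\ge 2p+1\ge 11$ (since $p+1$ is even and hence not prime, $q$ must exceed $p+1$); therefore $\tfrac{q(q-1)}{2}\ge 55$ and $\gamma(\coeng{F_{p,q}})\ge 55$.

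Collecting the three cases gives the planar and toroidal characterisations in (a) and (b), and shows that in every remaining case $\gamma(\coeng{F_{p,q}})\ge 5$. The main point to get right --- the only real obstacle --- is the gap argument for (c): one must invoke the primality of $q$ to conclude that the genus jumps directly from $1$ to at least $5$ (for $p=2$ from $q=7$ to $q=11$, for $p=3$ from $q=7$ to $q=13$, and $\gamma\ge 55$ once $p\ge 5$), so that the values $2$ and $3$ are never attained and $\coeng{F_{p,q}}$ is never double- or triple-toroidal.
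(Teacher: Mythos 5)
Your proposal is correct and follows essentially the same route as the paper's proof: both apply the formula of Theorem~\ref{pq-genus} and split into the cases $p=2$, $p=3$, $p\ge 5$, using the primality and congruence constraints on $q$ to see that the genus jumps from $1$ directly to at least $5$. Your treatment is in fact slightly more explicit than the paper's about why the next admissible primes are $11$, $13$, and why $q\ge 2p+1$ when $p\ge 5$, but the argument is the same.
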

\begin{proof}
We consider the following cases.

\noindent \textbf{Case 1.} $p = 2$ 	

By Theorem \ref{pq-genus} we have	$\gamma(\coeng(G)) = \left\lceil \frac{(q-3)(q-4)}{12}\right\rceil$. If $q = 3$ then  $\frac{(q-3)(q-4)}{12} = 0$, Therefore, $\gamma(\coeng(G)) = 0$. If $q = 5$ and $7$ then $\frac{(q-3)(q-4)}{12} = \frac{1}{6}$ and $1$ respectively. Therefore, $\gamma(\coeng(G)) = 1$. If $q \geq 11$ then $\frac{(q-3)(q-4)}{12} \geq 4 + \frac{2}{3}$. Therefore, $\gamma(\coeng(G))\geq 5$. 

\noindent \textbf{Case 2.} $p = 3$ 	

By Theorem \ref{pq-genus} we have	
\[
\gamma(\coeng(G)) = \frac{q(q - 1)}{2}\left\lceil\frac{(p-3)^2}{4}\right\rceil + \left\lceil \frac{(q-3)(q-4)}{12}\right\rceil.
\]
In this case we have $\frac{q(q - 1)}{2}\left\lceil\frac{(p-3)^2}{4}\right
\rceil = 0$.  If $q = 7$ then $\frac{(q-3)(q-4)}{12} = 1$. Therefore, $\left\lceil \frac{(q-3)(q-4)}{12}\right\rceil = 1$ and so $\gamma(\coeng(G)) = 1$.  If $q \geq 13$ then $\frac{(q-3)(q-4)}{12} \geq 7.5$. Therefore, $\left\lceil \frac{(q-3)(q-4)}{12}\right\rceil \geq 8$ and so $\gamma(\coeng(G)) \geq 8$.

\noindent \textbf{Case 3.} $p \geq 5$ 	

By Theorem \ref{pq-genus} we have	
\[
\gamma(\coeng(G)) = \frac{q(q - 1)}{2}\left\lceil\frac{(p-3)^2}{4}\right\rceil + \left\lceil \frac{(q-3)(q-4)}{12}\right\rceil.
\]

In this case we have  $\left\lceil\frac{(p-3)^2}{4}\right
\rceil \geq 1$. If $q \geq 11$ then $\frac{q(q - 1)}{2} \geq 55$ and $\left\lceil \frac{(q-3)(q-4)}{12}\right\rceil \geq 5$. Therefore, $\gamma(\coeng(G)) \geq 60$. Hence, the result follows.
\end{proof}

\begin{thm} \label{gen} 
Let $G$ be a finite non-Engel group such that $\omega(\coeng(G))\leq 4$. Then
\begin{enumerate}
\item $\coeng(G)$ is toroidal  if and only if $G\cong A_4, C_3 \times D_{6}$,
\item $\coeng(G)$ is not double-toroidal. 
\end{enumerate}
\end{thm}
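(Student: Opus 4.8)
The plan is to reduce the statement to a finite classification of groups and then to read the genus of the resulting graph off the realizations of Section~3 and the genus formulas of this section. Since $\omega(\coeng{G})\le 4<15$, the results of Abdollahi recalled in Section~2 show that $G$ is soluble and that $[G:\Fit(G)]$ is bounded by a function of the clique number, so only finitely many isomorphism types of $G/\Fit(G)$ occur. To turn this into a genuinely finite search I would use Lemma~\ref{ad}: right translation by the hypercenter $Z^*(G)$ acts on $\coeng{G}$ as a group of automorphisms, so $\coeng{G}$ is a blow-up of a smaller graph by $Z^*(G)$-cosets, and its genus grows with the size of this blow-up. Together with the bound on $[G:\Fit(G)]$, the hypotheses ``toroidal'' or ``double-toroidal'' then bound $|v(\coeng{G})|=|G|-|\Fit(G)|$ and hence $|G|$, so that a finite check determines every admissible $G$. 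Carrying this out, one finds that $\coeng{G}$ is always a balanced complete multipartite graph $K_{a\cdot b}$ with $a=\omega(\coeng{G})\le 4$ parts: for the families of Section~3 this is Theorems~\ref{dihed}, \ref{pq} and~\ref{bipar}, while the groups outside those families must be treated individually. The decisive such group is $A_4$: its eight elements of order~$3$ commute exactly when they generate the same Sylow $3$-subgroup and otherwise are adjacent, so $\coeng{A_4}\cong K_{2,2,2,2}$. I expect this classification to be the main obstacle, since it is the only genuinely group-theoretic step, it is where the sporadic group $A_4$ (not realized in Section~3) appears, and it is where one must verify that no other small non-Engel group --- such as $\mathrm{SL}(2,3)$, the generalized dihedral group of $C_3\times C_3$, or an unforeseen product with an Engel group --- survives or forces $\omega(\coeng{G})=2$.

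Granting the classification, part~(a) is a routine genus computation, the value $\omega(\coeng{G})\ge 3$ ensuring that $\coeng{G}$ has at least three parts. The balanced tripartite graphs $K_{b,b,b}$ arise from the groups $H\times D_6$ with $H$ Engel of order $b$ (and, for $b=2^t$, from $D_{2^{t+1}\cdot 3}$ and $Q_{2^{t+1}\cdot 3}$), and by~\eqref{genus-Kn,n,n} have genus $(b-1)(b-2)/2$, which equals $1$ only for $b=3$; by the classification the unique group with $\coeng{G}\cong K_{3,3,3}$ is $C_3\times D_6$. Among the balanced four-part graphs $K_{b,b,b,b}$, the graph $K_4$ ($b=1$) is planar, $K_{2,2,2,2}$ ($b=2$) is the toroidal cocktail-party graph, and for $b\ge 3$ the Euler bound $\gamma(\Gamma)\ge\bigl(|e(\Gamma)|-3|v(\Gamma)|+6\bigr)/6$ forces genus at least~$4$; the unique group with $\coeng{G}\cong K_{2,2,2,2}$ is $A_4$. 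All remaining groups ($D_6$, $D_{12}$, $Q_{12}$, with co-Engel graphs $K_3$ and $K_{2,2,2}$) are planar, recovering Abdollahi's list. Hence $\coeng{G}$ is toroidal if and only if $G\cong A_4$ or $C_3\times D_6$, proving~(a).

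Part~(b) then follows because the genus~$2$ never occurs along the classification. For each admissible shape the genus misses the value~$2$: the tripartite graphs give $(b-1)(b-2)/2\in\{0,1,3,6,\dots\}$ by~\eqref{genus-Kn,n,n}; the four-part graphs give $0$, $1$, or a value at least~$4$ as above; the graphs $K_m$ with $m\le4$ are planar by~\eqref{genus-Kn}; and even the bipartite graphs $K_{b,b}$ (which do not occur here, as $\omega(\coeng{G})\ge 3$) would by~\eqref{genus-Km,n} have genus $\lceil(b-2)^2/4\rceil\in\{0,1,3,4,\dots\}$. Since no group in the classification yields a co-Engel graph of genus exactly~$2$, $\coeng{G}$ is never double-toroidal, which is~(b).
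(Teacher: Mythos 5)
Your genus computations in parts (a) and (b) are correct and essentially match the paper's once the list of candidate groups is in hand; your identification of $\coeng{A_4}$ with the cocktail-party graph $K_{2,2,2,2}$ and the Euler-formula bound for $K_{b,b,b,b}$, $b\ge 3$, are a clean alternative to the paper's explicit toroidal embedding of $\coeng{A_4}$. The genuine gap is the classification itself, which you defer to ``a finite check'' and yourself flag as the main obstacle. As stated, your reduction does not terminate: Abdollahi's results bound $[G:\Fit(G)]$ in terms of the clique number but leave $|\Fit(G)|$, hence $|G|$, uncontrolled, and the mechanism you propose for bounding the remaining freedom --- Lemma~\ref{ad} plus the genus hypothesis --- controls only $|Z^*(G)|$, which in general can be far smaller than $\Fit(G)$ (the hypercenter can be trivial while the Fitting subgroup is arbitrarily large, as in generalized dihedral groups over large abelian groups). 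So ``toroidal or double-toroidal'' together with ``$[G:\Fit(G)]$ bounded'' does not by itself bound $|v(\coeng{G})|=|G|-|\Fit(G)|$. Likewise, your assertion that $\coeng{G}$ is always a balanced complete multipartite graph with $\omega(\coeng{G})$ parts is only verifiable a posteriori for the groups on the final list; it cannot be assumed while the classification is open.

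The missing ingredient, and the paper's actual starting point, is the much sharper consequence of $\omega(\coeng{G})\le 4$: by \cite[Proposition 1.4]{aa1} and \cite[Theorem 1.2]{aa}, $G/Z^*(G)\cong D_6$ or $A_4$. This immediately gives $|G|=6|Z^*(G)|$ or $12|Z^*(G)|$, after which Lemma~\ref{ad} finishes the reduction: in the $D_6$ case the triangle $\coeng{D_6}$ lifts to a $K_{m,m,m}$ subgraph of $\coeng{G}$ with $m=|Z^*(G)|$, so \eqref{genus-Kn,n,n} and $\gamma(\coeng{G})\le 2$ force $m\le 3$ and hence $G\cong D_6$, $D_{12}$, $Q_{12}$ or $C_3\times D_6$; in the $A_4$ case a $K_{4,4}$ subgraph of $\coeng{A_4}$ lifts to a $K_{8,8}$ subgraph whenever $|Z^*(G)|\ge 2$, and $\gamma(K_{8,8})=9$ forces $G\cong A_4$. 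With that quotient classification supplied, your argument goes through and coincides with the paper's.
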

\begin{proof} 
Since $\omega(\coeng(G))\leq 4$, by  \cite[Proposition 1.4]{aa1} and  \cite[Theorem 1.2]{aa}, we have $\frac{G}{Z_\infty(G)}\cong D_6$ or $A_4$.

Suppose $\frac{G}{Z_\infty(G)}\cong D_6$. Let $\bar{G}=\frac{G}{Z_\infty(G)}$ and  $\bar{x}=xZ_\infty(G)$ for every $x\in G$. Since $\bar{G} \cong D_6$, we have $\coeng(\bar{G}) \cong \coeng(D_6)$. Let $\phi: \coeng(\bar{G}) \longrightarrow \coeng(D_6)$ be a graph isomorphism. Let $a,b \in D_6$ such that $D_6=\langle a,b \rangle$ and $a^3=b^2=1$. Then $\{b,ab,a^2b\}$ forms a clique in $\coeng(D_6)$. Therefore $\{\bar{x}_1=\phi^{-1}(b),\bar{x}_2=\phi^{-1}(ab),\bar{x}_3=\phi^{-1}(a^2b)\}$ forms a clique of $\coeng(\bar{G})$. By Theorem~\ref{ad}, every element of $x_iZ_\infty(G)$ is adjacent to every element of $x_jZ_\infty(G)$, for $i,j\in \{0,1,2\}, i\neq j$. Thus $K_{m,m,m}$ is a subgraph of $\coeng(G)$, where $m=|Z_\infty(G)|$. Therefore, by \eqref{genus-Kn,n,n}, we have 
\begin{equation}\label{new-eq-2}
\gamma(K_{m,m,m})= \frac{(m-1)(m-2)}{2}\leq \gamma(\coeng(G)). 
\end{equation}
Suppose that $\gamma(\coeng(G))\leq 2$. In that case, by \eqref{new-eq-2}, we get $m\leq 3$. 

If $m=1$, then $G$ is a non-nilpotent group of order $6$. Thus $G\cong D_6$ and $\coeng(D_6)$ is planar.

If $m=2$, then $G$ is a non-nilpotent group of order $12$ with $|Z_\infty(G)|=2$. Thus $G\cong D_{12},Q_{12}$. 
Therefore,  $\coeng(G)$ is planar. 

If $m=3$, then $G$ is a non-nilpotent graph of order $18$ with $|Z_\infty(G)|=3$. Thus $G\cong C_3\times D_6$. Here $\coeng(C_3\times D_6) \cong K_{3,3,3}$ and $\gamma(K_{3,3,3})=\frac{(3-1)(3-2)}{2}=1$. Thus $\coeng(C_3\times D_6)$ is toroidal. An embedding of $\coeng(C_3\times D_6)$ on a torus is shown in Figure \ref{fig1}, where $C_3=\langle a\rangle$.

\begin{figure}[htbp]
\begin{center}
\setlength{\unitlength}{3pt}
\begin{picture}(50,30)(0,0)

\put(-25,-20){\dashbox{1}(100,45)[br]}

\put(-25,10){\circle*{3}}
\put(-25,-5){\circle*{3}}

\put(75,-5){\circle*{3}}
\put(75,10){\circle*{3}}

\put(0,25){\circle*{3}}
\put(25,25){\circle*{3}}
\put(50,25){\circle*{3}}

\put(0,-20){\circle*{3}}
\put(20,-20){\circle*{3}}
\put(50,-20){\circle*{3}}

\put(8.3,10){\circle*{3}}
\put(41.6,10){\circle*{3}}
\put(8.3,-5){\circle*{3}}
\put(41.6,-5){\circle*{3}}

\put(75,-7){\line(0,1){16}}
\put(-25,10){\line(0,-1){16}}

\put(0,25){\line(1,0){25}}
\put(25,25){\line(1,0){25}}
\put(50,25){\line(1,0){25}}
\put(-25,25){\line(1,0){25}}

\put(0,-20){\line(1,0){25}}
\put(25,-20){\line(1,0){25}}
\put(50,-20){\line(1,0){25}}
\put(-25,-20){\line(1,0){25}}

\put(-25,10){\line(1,0){33}}
\put(8.3,10){\line(1,0){33}}
\put(41.6,10){\line(1,0){33}}

\put(-25,-5){\line(1,0){33}}
\put(8.3,-5){\line(1,0){33}}
\put(41.6,-5){\line(1,0){33}}

\put(-25,10){\line(5,-2){34}}
\put(8.3,10){\line(5,-2){34}}
\put(41.6,10){\line(5,-2){34}}

\put(8.3,-5){\line(0,1){15}}
\put(41.6,-5){\line(0,1){15}}

\put(0,25){\line(3,-5){9}}
\put(25,25){\line(6,-5){17}}
\put(50,25){\line(5,-3){25}}

\put(0,25){\line(-5,-3){25}}
\put(25,25){\line(-6,-5){17}}
\put(50,25){\line(-3,-5){9}}

\put(-25,-5){\line(5,-3){25}}

\put(-25,-5){\line(3,-1){45}}

\put(8.3,-5){\line(5,-6){13}}
\put(8.3,-5){\line(3,-1){42}}
\put(41.6,-5){\line(3,-5){9}}
\put(41.6,-5){\line(4,-1){33.5}}

\put(0,-20){\line(-4,1){25}}

\put(-42,9){$(1, ab)$}
\put(-42,-6){$(1, a^2b)$}

\put(76,9){$(1, ab)$}
\put(76,-6){$(1, a^2b)$}

\put(-8,27){$(1, b)$}
\put(17,27){$(a, ab)$}
\put(42,27){$(a^2, a^2b)$}

\put(-8,-24){$(1, b)$}
\put(14,-24){$(a, ab)$}
\put(42,-24){$(a^2, a^2b)$}

\put(-9,11){$(a, a^2b)$}
\put(28,11){$(a^2, b)$}
\put(9,-4){$(a, b)$}
\put(42,-4){$(a^2, ab)$}

\end{picture}
\vspace{3cm} \caption{Embedding of $\coeng(C_3\times D_6)$ on a torus.} \label{fig1}
\end{center}
\end{figure}

\begin{figure}[htbp]
	\begin{center}
		\setlength{\unitlength}{3pt} 
		\begin{picture}(50,50)(0,0)

			\put(0,40){\circle*{3}}
			\put(40,40){\circle*{3}}
			\put(70,17){\circle*{3}}
			\put(70,-3){\circle*{3}}
			\put(40,-25){\circle*{3}}
			\put(0,-25){\circle*{3}}
			\put(-30,-3){\circle*{3}}
			\put(-30,17){\circle*{3}}

			\put(0,40){\line(1,0){40}}
			\put(0,40){\line(3,-1){70}}
			\put(0,40){\line(5,-3){71}}
			\put(0,40){\line(-4,-3){30}}
			\put(0,40){\line(0,-1){65}}
			\put(0,40){\line(3,-5){39}}
			
			\put(40,40){\line(4,-3){30}}
			\put(40,40){\line(0,-1){65}}
			\put(40,40){\line(-5,-3){71}}
			\put(40,40){\line(-3,-1){70}}
			\put(40,40){\line(-3,-5){39}}

			\put(70,17){\line(0,-1){20}}
			\put(70,17){\line(-5,-3){70}}
			\put(70,17){\line(-5,-1){100}}
			\put(70,17){\line(-1,0){100}}

			\put(70,-4){\line(-3,-2){31}}
			\put(70,-4){\line(-1,0){100}}
			\put(70,-2.5){\line(-3,-1){70}}
			\put(70,-4){\line(-5,1){100}}
			
			\put(40,-25){\line(-1,0){40}}
			\put(40,-25){\line(-5,3){70}}
			\put(40,-25){\line(-3,1){70}}
			
			\put(0,-25){\line(-4,3){31}}
			
			\put(-30,-4){\line(0,1){22}}

			\put(-6,42){$(2,3,4)$}
			\put(34,42){$(1,2,3)$}
			\put(72,16){$(1,3,4)$}
			\put(72,-5){$(1,3,2)$}
			\put(34,-29){$(1,4,3)$}
			\put(-6,-29){$(1,4,2)$}
			\put(-44,-5){$(2,4,3)$}
			\put(-44,16){$(1,2,4)$}
			
		\end{picture}
		\vspace{3cm} \caption{$\coeng(A_4)$} \label{figA4}
	\end{center}
\end{figure}

\begin{figure}[htbp]
\begin{center}
\setlength{\unitlength}{3pt}
\begin{picture}(50,40)(0,0)

\put(-23,-25){\dashbox{1}(95,60)[br]}

\put(7,35){\circle*{3}}
\put(42,35){\circle*{3}}

\put(72,21){\circle*{3}}
\put(72,-11){\circle*{3}}

\put(42,-25){\circle*{3}}
\put(7,-25){\circle*{3}}

\put(-23,-11){\circle*{3}}
\put(-23,21){\circle*{3}}

\put(7,22){\circle*{3}}
\put(7,-1){\circle*{3}}

\put(42,-12){\circle*{3}}
\put(42,0){\circle*{3}}

\put(72,21){\line(0,-1){32}}
\put(72,21){\line(-2,1){30}}
\put(72,21){\line(-1,0){66}}
\put(72,21){\line(-5,1){66}}

\put(-23,21){\line(3,-1){66}}
\put(-23,21){\line(3,-2){31}}
\put(-23,21){\line(0,-1){31}}

\put(72,-11){\line(-1,0){30}}
\put(72,-11){\line(-2,1){66}}
\put(72,-11){\line(-3,1){31}}

\put(-23,-11){\line(3,1){31}}
\put(-23,-11){\line(2,-1){30}}

\put(7,35){\line(1,0){34}}
\put(7,35){\line(0,-1){14}}

\put(7,-25){\line(0,1){25}}
\put(7,-25){\line(1,0){35}}
\put(7,-25){\line(-5,1){30}}
\put(72,-19){\line(-5,1){30}}

\put(7,22){\line(-5,1){30}}
\put(72,28){\line(-4,1){30}}

\put(42,-25){\line(0,1){12}}
\put(42,-25){\line(-3,2){36}}

\put(7,22){\line(5,-3){36}}
\put(7,22){\line(-3,1){30}}
\put(72,32){\line(-3,1){10}}
\put(64,-25){\line(-5,3){22}}

\put(42,0){\line(0,-1){25}}
\put(42,0){\line(-1,0){35}}
\put(42,0){\line(-5,2){65}}
\put(72,25){\line(-3,1){30}}

\put(42,-12){\line(-3,1){35}}

\put(73,20){$(2,3,4)$}
\put(-37,20){$(2,3,4)$}

\put(73,-12){$(1,2,3)$}
\put(-37,-12){$(1,2,3)$}

\put(1,37){$(1,3,4)$}
\put(1,-30){$(1,3,4)$}

\put(36,37){$(1,3,2)$}
\put(36,-30){$(1,3,2)$}

\put(9,23){$(1,4,2)$}
\put(-8,-1){$(1,2,4)$}

\put(28,-15){$(2,4,3)$}
\put(29,-4){$(1,4,3)$}

\put(-26,-20){$a$}
\put(73,-20){$a$}

\put(-26,24){$b$}
\put(73,24){$b$}

\put(-26,27){$c$}
\put(73,27){$c$}

\put(-26,31){$d$}
\put(73,31){$d$}

\put(61,36){$e$}
\put(63,-28){$e$}

\end{picture}
\vspace{3cm} \caption{Embedding of $\coeng(A_4)$ on a torus.} \label{figeA4}
\end{center}
\end{figure} 

Suppose $\frac{G}{Z_\infty(G)}\cong A_4$. Suppose that $|Z_\infty(G)|\geq 2$. Let $\bar{G}=\frac{G}{Z_\infty(G)}$ and $\bar{x}=xZ_\infty(G)$ for every $x\in G$. Since $\bar{G} \cong A_4$, we have $\coeng(\bar{G}) \cong \coeng(A_4)$. Let $\phi: \coeng(\bar{G}) \longrightarrow \coeng(A_4)$ be a graph isomorphism. As we see in Figure \ref{figA4}, $\coeng(A_4)$ has a subgraph isomorphic to $K_{4,4}$ with parts $H=\{(2,3,4),(1,2,4),(2,4,3),(1,4,2)\}$ and $K=\{(1,2,3),(1,3,4),(1,3,2),(1,4,3)\}$. Therefore, $\coeng(\bar{G})$ has a subgraph isomorphic to $K_{4,4}$ with parts 
$$\bar{H}=\{\phi^{-1}(2,3,4),\phi^{-1}(1,2,4),\phi^{-1}(2,4,3),\phi^{-1}(1,4,2)\}$$ and 
$$\bar{K}=\{\phi^{-1}(1,2,3),\phi^{-1}(1,3,4),\phi^{-1}(1,3,2),\phi^{-1}(1,4,3)\}.$$

Let $z\in Z_\infty(G),z\neq 1$. Then, by Theorem~\ref{ad}, every element of $\bar{H}\cup z\bar{H}$ is adjacent to every element of $\bar{K}\cup z\bar{K}$, showing that $K_{8,8}$ is a subgraph of $\coeng(G)$. 

Thus $\gamma(\coeng(G))\geq \gamma(K_{8,8})=9$, a contradiction. Thus $|Z_\infty(G)|=1$ and so $G\cong A_4$. An embedding of the  graph $\coeng(A_4)$ on a torus is shown in Figure \ref{figeA4}. This completes the proof.
\end{proof}

Let $N_k$ denote the surface formed by a connected sum of $k$ projective planes ($k\geq 1$). The number $k$ is called the crosscap of $N_k$. A simple graph $\Gamma$ which can be embedded in $N_k$ but not in $N_{k-1}$, is call a graph of crosscap $\bar{\gamma}(\Gamma) = k$.  
A graph $\Gamma$ with $\bar{\gamma}(\Gamma)=1$ is called a projective graph. The following theorem is useful in the next result. 

\begin{thm} {\rm (\cite{bou, rin}). }
For positive integers $m$ and $n$, we have 
\begin{enumerate}
\item $\bar{\gamma}(K_{m,n})=\lceil \frac{(m-2)(n-2)}{2}  \rceil$ if $m,n \geq 2$.
		
\item $ \bar{\gamma}(K_n)= \begin{cases} 
			\lceil \frac{(n-3)(n-4)}{6} \rceil, & \textrm{ if $n\geq 3$ and $n\neq 7$,} \\
			3, & \textrm{ if $n = 7$.} \\
		\end{cases} $
\end{enumerate}
\end{thm}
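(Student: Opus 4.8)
The plan is to prove both formulae by the classical two-sided method for nonorientable genus: an Euler-characteristic lower bound dictated by the girth of the graph, matched by an explicit family of embeddings attaining it. Recall that a graph cellularly embedded in $N_k$ satisfies $v-e+f=2-k$, and that if the girth is $g$ then every face is bounded by at least $g$ edges, so $gf\le 2e$ and hence $f\le 2e/g$.

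For part (1), $K_{m,n}$ is bipartite and therefore has girth $4$, so $f\le e/2=mn/2$. Substituting $v=m+n$ and $e=mn$ into $k=2-v+e-f$ gives
\[
k\ \ge\ 2-(m+n)+mn-\frac{mn}{2}\ =\ \frac{(m-2)(n-2)}{2},
\]
and since $k$ is an integer this yields $\bar{\gamma}(K_{m,n})\ge\lceil (m-2)(n-2)/2\rceil$. The reverse inequality is the substantial part: I would invoke Ringel's construction of quadrilateral (or, where the bound is not an integer, near-quadrilateral) embeddings of $K_{m,n}$, obtained from carefully designed rotation systems, which realize exactly this number of crosscaps. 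This is where essentially all of the difficulty in \cite{rin} resides.

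For part (2) the lower bound is analogous but uses girth $3$: now $3f\le 2e$, so $f\le n(n-1)/3$, and substituting $v=n$ and $e=\binom{n}{2}$ into the Euler relation gives
\[
k\ \ge\ 2-n+\frac{n(n-1)}{2}-\frac{n(n-1)}{3}\ =\ \frac{(n-3)(n-4)}{6},
\]
so $\bar{\gamma}(K_n)\ge\lceil (n-3)(n-4)/6\rceil$. The matching upper bound again demands explicit triangular or near-triangular embeddings, produced case by case according to $n\bmod 6$ via the current-graph machinery of Ringel and Youngs; building and verifying these constructions is the main obstacle.

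The single genuine exception is $n=7$, where the bound above equals $\lceil 4\cdot3/6\rceil=2$. Equality $k=2$ would force $3f=2e$, i.e.\ a triangulation of the Klein bottle $N_2$ by $K_7$, necessarily with $f=e-v=14$ triangular faces. I would finish by showing that no such triangulation exists: a finite case analysis of the admissible rotation systems on $7$ vertices (equivalently, of how the $21$ edges can fill $14$ triangles on a surface of nonorientable genus $2$) rules it out, even though $K_7$ does triangulate the torus. Hence $\bar{\gamma}(K_7)=3$, the value attained by a triangular embedding on $N_3$. I expect this nonembeddability check, together with the positive constructions underlying the two general upper bounds, to be the crux of the whole argument.
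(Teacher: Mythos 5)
The paper does not prove this statement at all: it is quoted verbatim from the literature, with the proof delegated to Bouchet \cite{bou} and Ringel \cite{rin}. So the only meaningful comparison is between your sketch and those classical proofs, and at that level your outline is faithful: the lower bounds via Euler's formula and girth are computed correctly (your algebra checks out, e.g.\ $2-(m+n)+mn-\frac{mn}{2}=\frac{(m-2)(n-2)}{2}$ and $2-n+\binom{n}{2}-\frac{n(n-1)}{3}=\frac{(n-3)(n-4)}{6}$), and you correctly isolate $n=7$ as the unique exception, where meeting the bound would force a $14$-face triangulation of the Klein bottle by $K_7$.

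As a self-contained proof, however, the proposal has a genuine gap, and you concede as much yourself: everything that makes the theorem nontrivial is asserted rather than established. The matching upper bounds require explicit (near-)quadrilateral embeddings of $K_{m,n}$ and (near-)triangular embeddings of $K_n$ in the appropriate nonorientable surfaces --- this is the content of Ringel's work (with Bouchet giving an alternative construction in the bipartite case via generative valuations), organised by residue classes of $n$ modulo $6$ in the complete-graph case --- and the nonexistence of a triangular embedding of $K_7$ in $N_2$ is a separate classical fact (equivalent to Franklin's theorem that the Klein bottle has chromatic number $6$ rather than the Heawood value $7$), whose ``finite case analysis'' you describe but do not carry out. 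Two smaller points you should also make explicit: the Euler--girth lower bound is only valid for cellular embeddings, so you need the standard fact that a minimum-crosscap embedding of a connected graph is cellular; and for the boundary cases where the formula evaluates to $0$ (e.g.\ $K_{2,n}$, $K_3$, $K_4$) one should note these graphs are planar, so the convention $\bar{\gamma}=0$ applies. None of this is wrong, but as written the proposal is a map of the proof in \cite{bou,rin}, not a proof.
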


We conclude this section by determining all finite non-Engel groups $G$ (up to isomorphism) such that $\omega(\coeng(G))\leq 4$ and $\coeng(G)$ is projective.
\begin{thm} 
Let $G$ be a finite non-Engel group such that $\omega(\coeng(G))\leq 4$. Then $\coeng(G)$ is projective if and only if $G \cong D_6, D_{12},Q_{12}$.
\end{thm}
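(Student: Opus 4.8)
The plan is to run the same case division as in the proof of Theorem~\ref{gen}, but to monitor the crosscap $\bar{\gamma}$ in place of the genus, reading ``projective'' as \emph{embeddable in the projective plane}, i.e.\ $\bar{\gamma}(\coeng{G})\le 1$. Since $\omega(\coeng{G})\le 4$, the results \cite[Proposition~1.4]{aa1} and \cite[Theorem~1.2]{aa} again force $G/Z^*(G)\cong D_6$ or $G/Z^*(G)\cong A_4$. For the ``if'' direction it suffices to note that $\coeng{D_6}\cong K_3$ and $\coeng{D_{12}}\cong\coeng{Q_{12}}\cong K_{2,2,2}$ are planar, hence embeddable in the projective plane.

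For the ``only if'' direction I would first eliminate $G/Z^*(G)\cong A_4$. Exactly as in Theorem~\ref{gen}, if $|Z^*(G)|\ge 2$ then $\coeng{G}$ contains $K_{8,8}$, while if $Z^*(G)=1$ then $G\cong A_4$ and, from Figure~\ref{figA4}, $\coeng{A_4}$ contains $K_{4,4}$. By the crosscap formula \cite{bou, rin}, $\bar{\gamma}(K_{4,4})=\lceil (4-2)(4-2)/2\rceil=2$, and a fortiori $\bar{\gamma}(K_{8,8})\ge 2$; since crosscap is monotone under passing to subgraphs, $\coeng{G}$ fails to be projective in both situations, so this quotient contributes nothing to the list.

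It then remains to treat $G/Z^*(G)\cong D_6$, with $m=|Z^*(G)|$. As established in the proof of Theorem~\ref{gen} (via Lemma~\ref{ad}), $K_{m,m,m}$ is a subgraph of $\coeng{G}$. If $m\ge 3$ then $K_{3,3,3}\subseteq K_{m,m,m}$, and pairing one part against the union of the other two exhibits $K_{3,6}$ as a subgraph; since $\bar{\gamma}(K_{3,6})=\lceil (3-2)(6-2)/2\rceil=2$, subgraph monotonicity again gives $\bar{\gamma}(\coeng{G})\ge 2$, so $\coeng{G}$ is not projective. Hence only $m=1$ and $m=2$ survive, and these were already pinned down in Theorem~\ref{gen}: $m=1$ yields $G\cong D_6$ with $\coeng{G}\cong K_3$, and $m=2$ yields $G\cong D_{12}$ or $Q_{12}$ with $\coeng{G}\cong K_{2,2,2}$, all planar. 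Combining the two quotient cases produces exactly the list $D_6,D_{12},Q_{12}$.

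The only genuine work is the crosscap bookkeeping, which is immediate from the cited formula once the right complete bipartite subgraphs are spotted ($K_{4,4}\subseteq\coeng{A_4}$ and $K_{3,6}\subseteq K_{3,3,3}$); I expect the main point to get right is the reading of ``projective'' as crosscap at most one, so that the planar examples $D_6,D_{12},Q_{12}$ do qualify while every other group in the class inevitably acquires a subgraph of crosscap at least two.
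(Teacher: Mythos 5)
Your argument is correct and follows essentially the same route as the paper: the same reduction to $G/Z^*(G)\cong D_6$ or $A_4$, elimination of the $A_4$ case via a $K_{4,4}$ subgraph of crosscap $2$, and elimination of $|Z^*(G)|\ge 3$ via $K_{3,6}\subseteq K_{m,m,m}$. Your explicit remark that ``projective'' must be read as embeddable in the projective plane (crosscap at most $1$), so that the planar graphs $K_3$ and $K_{2,2,2}$ qualify, is a point the paper passes over silently but which is needed for the stated equivalence to hold.
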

\begin{proof}
 By \cite[Proposition 1.4]{aa1} and \cite[Theorem 1.2]{aa}, we have $G/Z_\infty(G)\cong D_6$ or $A_4$. Suppose $G/Z_\infty(G)\cong A_4$. As seen in Figure \ref{figA4}, $\coeng(A_4)$ has a subgraph isomorphic to $K_{4,4}$. Thus $\coeng(G/Z_\infty(G))$ has a subgraph isomorphic to $K_{4,4}$. Note that $|v(\coeng(G/Z_\infty(G)))|=|v(\coeng(A_4))|=8$. Let $v(\coeng(G/Z_\infty(G)))=\{\bar{a_1},\bar{a_2},\dots,\bar{a_8}\}$. Then the induced subgraph of $\coeng(G)$ by the set $\{a_1,a_2,\dots,a_8\}$ has a subgraph isomorphic to $K_{4,4}$. Thus $\bar{\gamma}(\coeng(G))\geq \bar{\gamma}(K_{4,4})=2$, a contradiction. Thus $G/Z_\infty(G)\cong D_6$. 

Let $m=|Z_\infty(G)|$. As seen in the proof of Theorem \ref{gen}, $K_{m,m,m}$ is a subgraph of $\coeng(G)$. Thus if $m\geq 3$, then $K_{6,3}$ is a subgraph of $\coeng(G)$. But $\bar{\gamma}(K_{6,3})=2$. Therefore, $m\leq 2$. If $m=1$, then $G\cong D_6$ and if $m=2$, then $G\cong D_{12}, Q_{12}$. Note that $\coeng(D_6) \cong K_3$ 
is projective. Also, from Theorem \ref{dihed}, $\coeng(D_{12})\cong \coeng(Q_{12})\cong K_{3 \cdot 2}$
which is projective. This completes the proof.
\end{proof}

\section{Various spectra and energies of $\coeng(G)$}\label{Energy}

Let $A(\Gamma)$ and $D(\Gamma)$ be the adjacency matrix and degree matrix of a finite simple undirected graph $\Gamma$. Let $L(\Gamma) := D(\Gamma) - A(\Gamma)$ and $Q(\Gamma) := D(\Gamma) + A(\Gamma)$ be the Laplacian and signless Laplacian matrices of $\Gamma$. The characteristic polynomials of $A(\Gamma), L(\Gamma)$ and $Q(\Gamma)$ are called characteristic polynomial (denoted by $P(\Gamma, x)$), Laplacian polynomial (denoted by $P_L(\Gamma, x)$) and signless Laplacian polynomial (denoted by $P_Q(\Gamma, x)$) of $\Gamma$ respectively. The set of all the roots (with multiplicities) of $P(\Gamma, x)$,  $P_L(\Gamma, x)$ and  $P_Q(\Gamma, x)$  are called spectrum, Laplacian spectrum and signless Laplacian spectrum of $\Gamma$ denoted by $\Spec(\Gamma), \L-Spec(\Gamma)$ and $\Q-Spec(\Gamma)$ respectively. If $\{(\alpha_1)^{n_1}, (\alpha_2)^{n_2}, \dots, (\alpha_k)^{n_k}\}$ represents $\Spec(\Gamma), \L-Spec(\Gamma)$ or $\Q-Spec(\Gamma)$ then $\alpha_i$'s are roots of $P(\Gamma, x)$, $P_L(\Gamma, x)$ or $P_Q(\Gamma, x)$ with multiplicities $n_i$. A graph $\Gamma$ is called A-integral, L-integral and Q-integral if  $\Spec(\Gamma), \L-Spec(\Gamma)$ and $\Q-Spec(\Gamma)$ contain only integers. If  $\Gamma$ is   A-integral, L-integral and Q-integral simultaneously then it is called ALQ-integral.

The energy,  Laplacian energy and signless Laplacian energy of $\Gamma$ denoted by $E(\Gamma)$, $LE(\Gamma)$ and  $LE^{+}(\Gamma)$ respectively are defined as
\[
E(\Gamma) = \sum_{\lambda \in \Spec(\Gamma)} |\lambda|,
\]
\[
LE(\Gamma)=\sum_{\mu \in \L-Spec(\Gamma)} \left| \mu -\frac{2|e(\Gamma)|}{|v(\Gamma)|} \right| \quad \text{and} \quad LE^{+}(\Gamma)= \sum_{\nu \in \Q-Spec(\Gamma)} \left| \nu-\frac{2|e(\Gamma)|}{|v(\Gamma)|} \right|,
\]
where $v(\Gamma)$ and $e(\Gamma)$ are the sets of vertices and edges of $\Gamma$ respectively. 
A graph is called hyperenergetic or hypoenergetic if $E(\Gamma) > E(K_{|v(\Gamma)|})$ or $E(\Gamma) < |v(\Gamma)|$ respectively. It is not known whether co-Engel graphs of finite groups are hyperenergetic or hypoenergetic in general. However, we shall show that the $\coeng(G)$ for finite groups considered in this paper are neither hyperenergetic nor hypoenergetic. In \cite{E-LE-Gutman}, Gutman et al. conjectured that $E(\Gamma) \leq LE(\Gamma)$ for any finite graph $\Gamma$ (which is known as the E-LE conjecture). In general, this conjecture is false. It is not known whether co-Engel graphs of finite groups satisfy the E-LE conjecture. Our computations show that the co-Engel graphs of finite groups considered in this paper satisfy this conjecture. 

The following two well-known results are useful in  computing various spectra and energies of $\coeng(G)$ for the groups considered in Section~\ref{s:next}.
\begin{lem} \label{Kn-poly}
If  $\Gamma = K_{n}$ then
	\[
	P(\Gamma, x)= (x+1)^{n-1}\left(x- (n - 1)\right), \quad
	P_L(\Gamma, x)= x(x - n)^{n - 1} \text{ and }
	\]
	\[
	P_Q(\Gamma, x)=  (x- (n -2))^{n-1}(x-2(n - 1)).
	\]	
\end{lem}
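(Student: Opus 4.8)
The plan is to reduce all three computations to the spectrum of the all-ones matrix $J$, exploiting the fact that $K_n$ is regular and highly symmetric. First I would record the three matrices explicitly. Since every off-diagonal entry of the adjacency matrix is $1$, we have $A(\Gamma) = J - I$; since every vertex has degree $n-1$, the degree matrix is $D(\Gamma) = (n-1)I$. Consequently
\[
L(\Gamma) = D(\Gamma) - A(\Gamma) = nI - J, \qquad Q(\Gamma) = D(\Gamma) + A(\Gamma) = (n-2)I + J.
\]
Each of $A(\Gamma)$, $L(\Gamma)$, $Q(\Gamma)$ is therefore of the form $\alpha I + \beta J$ for suitable scalars $\alpha,\beta$.

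The key observation is that the spectra of all such matrices are governed by that of $J$. The all-ones vector is an eigenvector of $J$ with eigenvalue $n$, and its orthogonal complement (the vectors whose entries sum to zero) is an $(n-1)$-dimensional eigenspace for eigenvalue $0$. Because any eigenvector of $J$ with eigenvalue $\lambda$ is automatically an eigenvector of $\alpha I + \beta J$ with eigenvalue $\alpha + \beta\lambda$, the eigenvalues transfer by applying the same affine map to $\{n, 0\}$, with the multiplicities preserved ($1$ and $n-1$ respectively).

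It then remains only to substitute the three pairs $(\alpha,\beta)$ and read off the characteristic polynomials. For $A(\Gamma)$, the pair $(-1,1)$ gives eigenvalues $n-1$ (multiplicity $1$) and $-1$ (multiplicity $n-1$), hence $P(\Gamma,x) = (x-(n-1))(x+1)^{n-1}$. For $L(\Gamma)$, the pair $(n,-1)$ gives $0$ (multiplicity $1$) and $n$ (multiplicity $n-1$), hence $P_L(\Gamma,x) = x(x-n)^{n-1}$. For $Q(\Gamma)$, the pair $(n-2,1)$ gives $2(n-1)$ (multiplicity $1$) and $n-2$ (multiplicity $n-1$), hence $P_Q(\Gamma,x) = (x-(n-2))^{n-1}(x-2(n-1))$.

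There is no genuine obstacle in this lemma; it is a routine spectral computation. The only point that requires care is the bookkeeping of the affine shift and the matching of multiplicities: in each case the simple eigenvalue of $J$ (from the all-ones eigenvector) produces the multiplicity-one eigenvalue of the derived matrix, while the $(n-1)$-dimensional kernel of $J$ supplies the eigenvalue of multiplicity $n-1$. Keeping these two roles straight is all that is needed to arrive at the stated polynomials.
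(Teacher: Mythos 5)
Your proof is correct: the identities $A(K_n)=J-I$, $L(K_n)=nI-J$, $Q(K_n)=(n-2)I+J$ and the transfer of the spectrum $\{n^1,0^{n-1}\}$ of $J$ through the affine map $\lambda\mapsto\alpha+\beta\lambda$ yield exactly the three stated polynomials. The paper records this lemma as a well-known fact and gives no proof of its own, and your argument is precisely the standard computation that justifies it.
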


\begin{lem}\label{Kab-poly}
If  $\Gamma = K_{a \cdot b}$ then
\[
P(\Gamma, x)=x^{a(b-1)}(x+b)^{a-1}\left(x- b(a - 1)\right), 
\]
\[
P_L(\Gamma, x)= x(x - b(a-1))^{a(b-1)} (x - ab)^{a - 1} \text{ and }
\]
\[
P_Q(\Gamma, x)=  (x- b(a - 1))^{a(b-1)}(x- b(a -2))^{a-1}(x-2b(a - 1)).
\]	
\end{lem}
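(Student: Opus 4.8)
The plan is to compute all three spectra directly from the structure of $K_{a\cdot b}$ and then read off the polynomials, since a complete multipartite graph with equal parts has a very rigid adjacency matrix. Index the $ab$ vertices by pairs $(i,s)$ with $1\le i\le a$ (the part) and $1\le s\le b$ (the position within the part); two vertices are adjacent precisely when their first coordinates differ. Writing $J_n$ for the $n\times n$ all-ones matrix and $I_n$ for the identity, this adjacency rule is captured exactly by the Kronecker product
\[
A(\Gamma)=(J_a-I_a)\otimes J_b,
\]
because $(J_a-I_a)_{i,j}$ equals $1$ when $i\ne j$ and $0$ otherwise, while $(J_b)_{s,t}=1$ for all $s,t$.

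First I would diagonalise the two factors. The matrix $J_a-I_a$ has eigenvalue $a-1$ with multiplicity $1$ (on the all-ones vector) and eigenvalue $-1$ with multiplicity $a-1$, while $J_b$ has eigenvalue $b$ with multiplicity $1$ and eigenvalue $0$ with multiplicity $b-1$. Since the eigenvalues of a Kronecker product are the pairwise products of the eigenvalues of the factors, the adjacency spectrum of $\Gamma$ consists of $b(a-1)$ once, $-b$ with multiplicity $a-1$, and $0$ with multiplicity $(b-1)+(a-1)(b-1)=a(b-1)$. Collecting these gives $P(\Gamma,x)=x^{a(b-1)}(x+b)^{a-1}\bigl(x-b(a-1)\bigr)$, as claimed.

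The Laplacian and signless Laplacian then come for free from regularity: every vertex is adjacent to all $b(a-1)$ vertices outside its part, so $\Gamma$ is $b(a-1)$-regular and $D(\Gamma)=b(a-1)I_{ab}$. Consequently $L(\Gamma)=b(a-1)I-A(\Gamma)$ and $Q(\Gamma)=b(a-1)I+A(\Gamma)$ share the eigenvectors of $A(\Gamma)$, with each adjacency eigenvalue $\lambda$ replaced by $b(a-1)-\lambda$ for $L$ and by $b(a-1)+\lambda$ for $Q$. Substituting $\lambda\in\{b(a-1),0,-b\}$ yields the Laplacian eigenvalues $0$, $b(a-1)$ (multiplicity $a(b-1)$) and $ab$ (multiplicity $a-1$), and the signless Laplacian eigenvalues $2b(a-1)$, $b(a-1)$ (multiplicity $a(b-1)$) and $b(a-2)$ (multiplicity $a-1$); these are exactly the stated factorisations of $P_L(\Gamma,x)$ and $P_Q(\Gamma,x)$.

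The computation itself is routine, so the only real care needed is the bookkeeping of multiplicities---in particular confirming that the eigenvalue $0$ of $A(\Gamma)$ absorbs contributions from both the $(-1,0)$ and the $(a-1,0)$ eigenvalue pairs to reach total multiplicity $a(b-1)$, and checking that the multiplicities sum to $ab$ in each of the three spectra. As a sanity check one can set $b=1$, where $K_{a\cdot 1}=K_a$ and the three formulas must collapse to those of Lemma \ref{Kn-poly}.
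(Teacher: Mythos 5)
Your proof is correct and complete. The paper itself offers no argument for this lemma at all: it is stated as one of two ``well-known results'' (alongside the formulas for $K_n$) and implicitly deferred to the literature on integral and $Q$-integral complete multipartite graphs. Your Kronecker-product decomposition $A(\Gamma)=(J_a-I_a)\otimes J_b$ is a clean way to supply the missing derivation: the eigenvalue bookkeeping is right (the two pairs contributing eigenvalue $0$ give total multiplicity $(b-1)+(a-1)(b-1)=a(b-1)$, and $1+(a-1)+a(b-1)=ab$ as required), and the passage to $L$ and $Q$ via regularity, $D(\Gamma)=b(a-1)I$, is exactly the standard shortcut. One could equally obtain the adjacency spectrum by the quotient-matrix/equitable-partition method that the references use for general complete multipartite graphs, but for equal part sizes your tensor-product argument is shorter and makes the multiplicities transparent. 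The sanity check at $b=1$ recovering Lemma \ref{Kn-poly} is a nice touch. Nothing to fix.
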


\noindent If  $m \geq 3$ is odd and $G \cong D_{2m}$ then, by Theorem \ref{dihede},	we have $\coeng(G) \cong K_m$. Therefore, using Lemma \ref{Kn-poly}, we get the following result.
\begin{thm}
If $G$ is isomorphic to $D_{2m}$,  where  $m \geq 3$ is odd, then 
\[
\Spec(\coeng(G)) = \left\{(-1)^{m - 1}, (m - 1)^1\right\}, \quad 
\L-Spec(\coeng(G)) = \left\{ (0)^1, m^{m - 1}\right\},
\]
\[
\Q-Spec(\coeng(G))\!=\!\left\{(m - 2)^{m - 1}, (2(m - 1))^1\right\}\!\text{ and }
\]
\[
 E(\coeng(G))\!=\!LE(\coeng(G))\!=\!LE^+(\coeng(G)) = 2(m - 1). 
\]
\end{thm}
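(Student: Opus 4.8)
The plan is to reduce everything to the complete graph $K_m$ and then read off the data from Lemma~\ref{Kn-poly}. First I would invoke Theorem~\ref{dihede} (equivalently Theorem~\ref{dihed}) to record that $\coeng{G}\cong K_m$ whenever $m\geq 3$ is odd and $G\cong D_{2m}$. Since the three spectra and all three energies are isomorphism invariants, it suffices to compute them for $K_m$.

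Next I would substitute $n=m$ into the three polynomial formulas of Lemma~\ref{Kn-poly} and list the roots with their multiplicities. From $P(K_m,x)=(x+1)^{m-1}(x-(m-1))$ the adjacency spectrum is $\{(-1)^{m-1},(m-1)^1\}$; from $P_L(K_m,x)=x(x-m)^{m-1}$ the Laplacian spectrum is $\{(0)^1,m^{m-1}\}$; and from $P_Q(K_m,x)=(x-(m-2))^{m-1}(x-2(m-1))$ the signless Laplacian spectrum is $\{(m-2)^{m-1},(2(m-1))^1\}$. This settles the three spectral claims directly.

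For the energies, the only quantity needed beyond the spectra is the ratio $2|e(\coeng{G})|/|v(\coeng{G})|$ appearing in the definitions of $LE$ and $LE^+$. Since $K_m$ has $m$ vertices and $\binom{m}{2}$ edges, this ratio equals $m-1$. Then $E(\coeng{G})=\sum_{\lambda}|\lambda|=(m-1)\,|-1|+|m-1|=2(m-1)$; for the Laplacian energy, $LE(\coeng{G})=|0-(m-1)|+(m-1)\,|m-(m-1)|=2(m-1)$; and for the signless Laplacian energy, $LE^+(\coeng{G})=(m-1)\,|(m-2)-(m-1)|+|2(m-1)-(m-1)|=2(m-1)$. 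Hence all three energies coincide with $2(m-1)$.

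Because every step is a direct substitution into the known formulas, there is no genuine obstacle here; the only points requiring care are the bookkeeping of multiplicities (so that each spectrum is reported as a multiset of the correct total size $m$) and correctly identifying $2|e|/|v|=m-1$ before evaluating the two Laplacian-type energies, since an error there would propagate into both $LE$ and $LE^+$.
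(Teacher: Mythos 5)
Your proposal is correct and follows exactly the paper's route: the paper likewise notes that $\coeng{D_{2m}}\cong K_m$ by Theorem~\ref{dihed} and then reads the spectra off Lemma~\ref{Kn-poly}, with the energies obtained by the same direct evaluation using $2|e|/|v|=m-1$. No discrepancies.
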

Using Theorem \ref{dihed}, Theorem \ref{pq} and Lemma \ref{Kab-poly}, we get the following results. 
\begin{thm}\label{Energy-D2m}
If $G$ is isomorphic to   $D_{2^{t+1}m}$ or  $Q_{2^{t+1}m}$, where $t \geq 1$ and $m \geq 3$ is odd, then 
\[
\Spec(\coeng(G)) = \left\{ (0)^{m(2^t-1)}, (-2^t)^{m-1}, \left(2^t(m - 1)\right)^1\right\}, 
\]
\[
\L-Spec(\coeng(G)) = \left\{(0)^1, (2^t(m-1))^{m(2^t-1)}, (2^tm)^{m - 1} \right\},  
\]
\[
\Q-Spec(\coeng(G)) = \left\{(2^t(m - 1))^{m(2^t-1)}, (2^t(m -2))^{m-1}, (2^{t+1}(m - 1))^1\right\}
\]
 and  $E(\coeng(G)) = LE(\coeng(G)) = LE^+(\coeng(G)) = 2^{t+1}(m - 1)$. 
\end{thm}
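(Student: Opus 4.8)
The plan is to reduce everything to a single complete multipartite graph and then quote the factored polynomials of Lemma~\ref{Kab-poly}. By Theorem~\ref{dihed}, for both $G = D_{2^{t+1}m}$ and $G = Q_{2^{t+1}m}$ we have $\coeng{G} \cong K_{m \cdot 2^t}$, so it suffices to determine the three spectra and the three energies of $\Gamma := K_{m \cdot 2^t}$. Setting $a = m$ and $b = 2^t$ in Lemma~\ref{Kab-poly} produces the characteristic, Laplacian and signless Laplacian polynomials of $\Gamma$ in fully factored form, after which the roots together with their exponents can simply be read off.

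Concretely, from $P(\Gamma, x) = x^{m(2^t-1)}(x + 2^t)^{m-1}\bigl(x - 2^t(m-1)\bigr)$ I would read $\Spec(\coeng{G})$ as $0$ with multiplicity $m(2^t-1)$, $-2^t$ with multiplicity $m-1$, and $2^t(m-1)$ once. The same substitution into the Laplacian polynomial $x\bigl(x - 2^t(m-1)\bigr)^{m(2^t-1)}(x - 2^t m)^{m-1}$ gives the Laplacian spectrum, and into the signless Laplacian polynomial it gives the signless Laplacian spectrum, each matching the displayed lists verbatim. This portion is pure bookkeeping.

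For the energies the one useful observation is that $\Gamma = K_{m \cdot 2^t}$ is regular: every vertex is adjacent to all vertices outside its own part, so $\Gamma$ has common degree $d = 2^t m - 2^t = 2^t(m-1)$, whence the average degree $\frac{2|e(\Gamma)|}{|v(\Gamma)|}$ equals $d = 2^t(m-1)$. The adjacency energy is then $E(\coeng{G}) = \sum_\lambda |\lambda| = 0 \cdot m(2^t-1) + 2^t(m-1) + 2^t(m-1) = 2^{t+1}(m-1)$. For the Laplacian energy one subtracts the average degree $d$ from each Laplacian eigenvalue and sums absolute values; since for a $d$-regular graph the Laplacian eigenvalues are exactly $d - \lambda$ as $\lambda$ ranges over the adjacency eigenvalues, each term collapses to $|\lambda|$, giving $LE(\coeng{G}) = E(\coeng{G})$. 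Applying the identical reasoning to $Q(\Gamma) = dI + A(\Gamma)$, whose eigenvalues are $d + \lambda$, yields $LE^+(\coeng{G}) = E(\coeng{G})$. Alternatively one may verify all three sums directly from the spectra above; in each case exactly two nonzero contributions of size $2^t(m-1)$ survive. Either route delivers $E = LE = LE^+ = 2^{t+1}(m-1)$.

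There is no genuine obstacle here: the mathematical content is entirely carried by Theorem~\ref{dihed} and Lemma~\ref{Kab-poly}, and the only step worth flagging is recognising that a complete multipartite graph with equal parts is regular, which is precisely what forces the average degree to coincide with the common degree and collapses the three energies to one value. The residual risk is clerical only — correctly pairing the exponents $m(2^t-1)$, $m-1$ and $1$ with the appropriate roots across the three polynomials.
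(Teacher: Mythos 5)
Your proposal is correct, and its backbone is identical to the paper's: reduce to $K_{m\cdot 2^t}$ via Theorem~\ref{dihed} and read the three spectra off the factored polynomials of Lemma~\ref{Kab-poly} with $a=m$, $b=2^t$. The one place you diverge is the energy computation: the paper evaluates $LE$ and $LE^+$ by brute force, computing $|e(\coeng{G})|$, the average degree $2^t(m-1)$, and then the absolute deviation of each Laplacian and signless Laplacian eigenvalue term by term. You instead observe that $K_{m\cdot 2^t}$ is $d$-regular with $d=2^t(m-1)$, so $L=dI-A$ and $Q=dI+A$, and the deviations $|\mu-d|$ collapse to $|\lambda|$ in both cases, forcing $E=LE=LE^+$ without further arithmetic. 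This is a genuinely cleaner route for that half of the theorem (and explains conceptually why the three energies coincide, rather than having it emerge from coincidental cancellation), while the paper's direct computation has the minor virtue of re-deriving the edge count and exhibiting each deviation explicitly. Both are valid; your fallback remark that one can also verify the sums directly is exactly the paper's method.
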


\begin{thm}\label{Energy-Fpq}
	If $G$ is isomorphic to   $F_{p, q}$, then 
	\[
	\Spec(\coeng(G)) = \left\{ (0)^{q(p-2)}, (-(p-1))^{q-1}, \left((p-1)(q - 1)\right)^1\right\} , 
	\]
	\[
	\L-Spec(\coeng(G)) = \left\{(0)^1, ((p-1)(q-1))^{q(p-2)}, (q(p-1))^{q - 1} \right\} ,  
	\]
	\[
	\Q-Spec(\coeng(G))\!=\!\left\{\!((p\!-\!1)(q - 1))^{q(p-2)}, ((p-1)(q -2))^{q-1}, (2(p-1)(q - 1))^1\!\right\} ,
	\]
	\[
	\text{ and } E(\coeng(G)) = LE(\coeng(G)) = LE^+(\coeng(G)) = 2(p-1)(q-1) .
	\]
	
\end{thm}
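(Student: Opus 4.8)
The plan is to reduce everything to an application of Theorem~\ref{pq} together with Lemma~\ref{Kab-poly}, since the structure of the argument will mirror Theorem~\ref{Energy-D2m} exactly. By Theorem~\ref{pq} we have $\coeng{F_{p,q}}\cong K_{q\cdot(p-1)}$, so I would set $a=q$ and $b=p-1$ and read off all three characteristic polynomials directly from Lemma~\ref{Kab-poly}. This gives
\[
P(\coeng{G},x)=x^{q(p-2)}(x+(p-1))^{q-1}\bigl(x-(p-1)(q-1)\bigr),
\]
and analogous factorizations for $P_L$ and $P_Q$, from which the three spectra claimed in the statement can be extracted immediately by identifying roots and their multiplicities (noting $a(b-1)=q(p-2)$, $b(a-1)=(p-1)(q-1)$, $ab=q(p-1)$, and so on).

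Next I would compute the three energies. For $E(\coeng{G})$ I would simply sum the absolute values of the adjacency eigenvalues: the $q(p-2)$ zero eigenvalues contribute nothing, and the remaining contributions are $(q-1)(p-1)$ from the $-(p-1)$ eigenvalues plus $(p-1)(q-1)$ from the single large eigenvalue, giving $2(p-1)(q-1)$. For $LE$ and $LE^{+}$ I first need the quantity $\tfrac{2|e(\coeng{G})|}{|v(\coeng{G})|}$, which equals the average degree. Since $\coeng{G}$ is the complete multipartite graph $K_{q\cdot(p-1)}$ on $q(p-1)$ vertices in which each vertex is non-adjacent only to the $p-2$ other vertices in its own part, every vertex has degree $(p-1)(q-1)$; hence the average degree is exactly $(p-1)(q-1)$. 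I would then shift each Laplacian (resp.\ signless Laplacian) eigenvalue by this value, take absolute values, and sum with the stated multiplicities, following the same bookkeeping as in Theorem~\ref{Energy-D2m}.

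The computation for $LE$ runs as follows: the shifted values are $|0-(p-1)(q-1)|=(p-1)(q-1)$ (multiplicity $1$), $|(p-1)(q-1)-(p-1)(q-1)|=0$ (multiplicity $q(p-2)$), and $|q(p-1)-(p-1)(q-1)|=p-1$ (multiplicity $q-1$), giving $LE=(p-1)(q-1)+(q-1)(p-1)=2(p-1)(q-1)$. The signless case is entirely parallel: the shifted signless Laplacian eigenvalues are $0$, $p-1$, and $(p-1)(q-1)$ with the appropriate multiplicities, again summing to $2(p-1)(q-1)$.

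I do not anticipate any genuine obstacle here, since the graph has already been identified and every eigenvalue of a complete multipartite graph is explicit; the work is purely the substitution $a=q$, $b=p-1$ and careful arithmetic. The only point demanding mild attention is confirming that the average degree really is $(p-1)(q-1)$ rather than something involving the part sizes in a more complicated way, but this follows at once from regularity of $K_{q\cdot(p-1)}$: each vertex misses precisely the $p-2$ vertices sharing its part, so the degree is the constant $q(p-1)-1-(p-2)=(p-1)(q-1)$, and regularity is exactly what makes the three energies coincide.
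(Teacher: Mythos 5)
Your proposal is correct and follows essentially the same route as the paper: identify $\coeng{F_{p,q}}\cong K_{q\cdot(p-1)}$ via Theorem~\ref{pq}, substitute $a=q$, $b=p-1$ into Lemma~\ref{Kab-poly} to read off the three spectra, and then sum absolute values (after shifting by the average degree $(p-1)(q-1)$ for $LE$ and $LE^{+}$) to obtain $2(p-1)(q-1)$ in all three cases. Your observation that the regularity of $K_{q\cdot(p-1)}$ forces the average degree to be $(p-1)(q-1)$ is a slightly cleaner justification than the paper's direct edge count $|e(\coeng{G})|=\frac{q(p-1)^2(q-1)}{2}$, but the bookkeeping is identical.
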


 We conclude this section with the following two corollaries.
\begin{cor}
	If $G$ is isomorphic to   $D_{2^{t+1}m}$ or  $Q_{2^{t+1}m}$, where $t \geq 1$ and $m \geq 3$ is odd, then 
	$\coeng(G)$ is neither hyperenergetic nor hypoenergetic.
\end{cor}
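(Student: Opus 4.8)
The plan is to reduce everything to the two quantities already produced by Theorem~\ref{Energy-D2m}: the order of the vertex set, $|v(\coeng{G})| = 2^t m$ (recall $\coeng{G}\cong K_{m\cdot 2^t}$ has $m$ parts each of size $2^t$, so it has $2^t m$ vertices), and the energy $E(\coeng{G}) = 2^{t+1}(m-1)$. By the definitions of hypoenergetic and hyperenergetic graphs, it suffices to establish the two inequalities $E(\coeng{G}) \ge |v(\coeng{G})|$ and $E(\coeng{G}) \le E(K_{|v(\coeng{G})|})$; I expect both to come out strict.

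First I would dispose of the hypoenergetic case. Writing $n = 2^t m$, the requirement is $2^{t+1}(m-1) \ge 2^t m$. Since $2^{t+1}(m-1) = 2^t(2m-2)$, this reduces to $2m-2 \ge m$, i.e.\ $m \ge 2$, which holds because $m \ge 3$ (and in fact holds strictly). Hence $\coeng{G}$ is not hypoenergetic. For the hyperenergetic case I would next record $E(K_n)$: by Lemma~\ref{Kn-poly} the spectrum of $K_n$ is $\{(-1)^{n-1},(n-1)^1\}$, so $E(K_n) = (n-1)+(n-1) = 2(n-1)$, and with $n = 2^t m$ this gives $E(K_{2^t m}) = 2(2^t m - 1) = 2^{t+1}m - 2$. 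Comparing with $E(\coeng{G}) = 2^{t+1}(m-1) = 2^{t+1}m - 2^{t+1}$, the desired inequality $E(\coeng{G}) \le E(K_n)$ becomes $-2^{t+1} \le -2$, i.e.\ $2^{t+1} \ge 2$, which holds for every $t \ge 1$ (again strictly). Therefore $\coeng{G}$ is not hyperenergetic, and the corollary follows.

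There is no genuinely hard step here: once Theorem~\ref{Energy-D2m} and Lemma~\ref{Kn-poly} are available, the whole argument is an elementary comparison of closed-form expressions for $E(\coeng{G})$, $|v(\coeng{G})|$ and $E(K_{|v(\coeng{G})|})$. The only point that warrants care is bookkeeping of the vertex count: the induced graph $\coeng{G}$ has $2^t m$ vertices rather than $|G| = 2^{t+1}m$, because the left Engel elements $\langle y\rangle$ have been deleted, and it is this smaller number $2^t m$ that must be used in both comparisons. With that kept straight, the inequalities are immediate.
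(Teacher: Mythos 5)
Your proposal is correct and follows essentially the same route as the paper: both arguments simply compare the closed forms $E(\coeng{G})=2^{t+1}(m-1)$, $|v(\coeng{G})|=2^tm$ and $E(K_{2^tm})=2(2^tm-1)$, reducing the two non-inequalities to $2^t(m-2)>0$ and $2-2^{t+1}<0$. Your care about using the vertex count $2^tm$ rather than $|G|$ matches what the paper does implicitly.
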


\begin{proof}
	We have $|v(\coeng(G))| = 2^tm$ and 
	$
	E(K_{2^tm}) = 2(2^tm - 1).
	$
	By Theorem \ref{Energy-D2m} we get
	\[
	E(\coeng(G)) - E(K_{|v(\coeng(G))|}) = 2^{t+1}(m-1) - 2(m2^t - 1)
	= -2(2^t - 1) < 0. 
	\]
	Therefore, $\coeng(G)$ is not hyperenergetic. We also have
	\[
	E(\coeng(G)) - |v(\coeng(G))| = 2^{t+1}(m-1) - 2^tm
	= 2^t(m - 2) > 0.
	\]
	Hence, $\coeng(G)$ is not hypoenergetic.
\end{proof}

\begin{cor}
If $G$ is isomorphic to   $F_{p, q}$, then 
 $\coeng(G)$ is neither hyperenergetic nor hypoenergetic.
\end{cor}

\begin{proof}
We have $|v(\coeng(G))| = q(p-1)$ and
$
E(K_{q(p-1)}) = 2(q(p-1)-1).
$
By Theorem \ref{Energy-Fpq} we get
\[
E(\coeng(G)) - E(K_{|v(\coeng(G))|}) = 2(p - 1)(q-1) - 2(q(p-1)-1)
	= -2(p - 2) \leq 0. 
\]
Therefore, $\coeng(G)$ is not hyperenergetic. We also have
\[
E(\coeng(G)) - |v(\coeng(G))| = 2(p - 1)(q-1) - q(p - 1) 
	= (p - 1)(q - 2) > 0.
\]
Hence, $\coeng(G)$ is not hypoenergetic.
\end{proof}

\section{Zagreb indices of $\coeng(G)$}
In this section, we compute Zagreb indices of $\coeng(G)$ for certain dihedral groups, dicyclic groups and the group $F_{p, q}$ and check whether they satisfy {the Hansen-Vuki{\v{c}}evi{\'c} conjecture.  The first and second Zagreb indices of a simple undirected graph $\Gamma$, denoted by $M_{1}(\Gamma)$ and $M_{2}(\Gamma)$ respectively, are given by 
\[
M_{1}(\Gamma) = \sum\limits_{v \in v(\Gamma)} \deg(v)^{2}  \text{ and }  M_{2}(\Gamma) = \sum\limits_{uv \in e(\Gamma)} \deg(u)\deg(v),
\]
where $\deg(v)$ is  the  degree of $v$. The following lemma is useful in obtaining our results.
\begin{lem}\label{ZI-lem}
If $\Gamma$ is isomorphic to the graph $K_{a \cdot b}$ then $M_{1}(\Gamma) = a(a - 1)^2b^3 $ and  $M_{2}(\Gamma) = \frac{a(a - 1)^{3}b^{4}}{2} $.
\end{lem}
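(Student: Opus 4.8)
The plan is to compute both indices directly from the definitions using the regularity of the complete multipartite graph $K_{a\cdot b}$. First I would record the two basic structural facts about $\Gamma=K_{a\cdot b}$: it has $ab$ vertices, each vertex lies in a part of size $b$, and each vertex is adjacent to every vertex outside its own part, so $\Gamma$ is regular of degree $\deg(v)=b(a-1)$ for every vertex $v$.

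For the first Zagreb index, since the graph is regular the sum $M_1(\Gamma)=\sum_{v}\deg(v)^2$ is simply the number of vertices times the common value of $\deg(v)^2$. This gives
\[
M_1(\Gamma) = ab\cdot\bigl(b(a-1)\bigr)^2 = a(a-1)^2 b^3,
\]
which is the claimed formula.

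For the second Zagreb index, I would first count the edges. Because $\Gamma$ is $b(a-1)$-regular on $ab$ vertices, the handshake lemma gives $|e(\Gamma)| = \tfrac12\sum_v\deg(v) = \tfrac12\,ab\cdot b(a-1) = \tfrac12 a(a-1)b^2$. Since every edge $uv$ joins two vertices of the same degree $b(a-1)$, each summand $\deg(u)\deg(v)$ equals $\bigl(b(a-1)\bigr)^2$, so
\[
M_2(\Gamma) = |e(\Gamma)|\cdot\bigl(b(a-1)\bigr)^2 = \frac{a(a-1)b^2}{2}\cdot b^2(a-1)^2 = \frac{a(a-1)^3 b^4}{2},
\]
as required.

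I do not expect any genuine obstacle here: the argument is entirely routine once the regularity and edge count are in hand, and both indices reduce to multiplying a count by a common degree factor. The only point requiring a moment of care is the edge count for $M_2$, where one must make sure to use the handshake lemma (or equivalently subtract the $a\binom{b}{2}$ non-edges within parts from the total $\binom{ab}{2}$) rather than miscount, but this is a one-line verification.
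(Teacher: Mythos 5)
Your proof is correct: the paper states Lemma \ref{ZI-lem} without proof, and your computation via the regularity $\deg(v)=b(a-1)$, the vertex count $ab$, and the handshake-lemma edge count $\tfrac12 a(a-1)b^2$ is exactly the routine argument the authors implicitly rely on. Both formulas check out, and the edge count agrees with the value $|e(\coeng{G})|$ used elsewhere in the paper.
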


 In 2007,  Hansen and Vuki{\v{c}}evi{\'c} \cite{hansen2007comparing}  conjectured that
\begin{equation}\label{Conj}
		\dfrac{M_{2}(\Gamma)}{\vert e(\Gamma) \vert} \geq \dfrac{M_{1}(\Gamma)}{\vert v(\Gamma) \vert} .
\end{equation}
In general, \eqref{Conj} is not true. However, it is not known whether \eqref{Conj} satisfies for $\coeng(G)$. The following results show that \eqref{Conj} is  true for $\coeng(G)$ if $G$ is isomorphic to $D_{2^{t+1}m}$,  $Q_{2^{t+1}m}$ and $F_{p, q}$, where $t \geq 1$,  $m \geq 3$ is odd and $p$, $q$ are primes such that $q\equiv 1 \mod p$.
\begin{thm}
If $G$ is isomorphic to   $D_{2^{t+1}m}$ or  $Q_{2^{t+1}m}$, where $t \geq 1$ and $m \geq 3$ is odd, then 
\[
M_1(\coeng(G)) = 2^{3t}m(m - 1)^2 \text{ and } M_2(\coeng(G)) = 2^{4t - 1}m(m - 1)^3
\]
Further, $\frac{M_{2}(\coeng(G))}{\vert e(\coeng(G)) \vert} = 2^{2t}(m - 1)^2 = \frac{M_{1}(\coeng(G))}{\vert v(\coeng(G)) \vert}$.
\end{thm}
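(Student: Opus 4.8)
The plan is to reduce everything to the combinatorial structure of the complete multipartite graph $K_{m \cdot 2^t}$, since by Theorem~\ref{dihed} we already know $\coeng{G} \cong K_{m \cdot 2^t}$ whenever $G = D_{2^{t+1}m}$ or $Q_{2^{t+1}m}$. The Zagreb indices are isomorphism invariants, so it suffices to apply Lemma~\ref{ZI-lem} with $a = m$ and $b = 2^t$. This immediately yields $M_1(\coeng{G}) = m(m-1)^2 (2^t)^3 = 2^{3t} m(m-1)^2$ and $M_2(\coeng{G}) = \tfrac{1}{2} m(m-1)^3 (2^t)^4 = 2^{4t-1} m(m-1)^3$, which are exactly the claimed formulas. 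Thus the first two displayed identities require no work beyond the substitution.

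For the final inequality — in fact an equality, which verifies the Hansen--Vuki{\v{c}}evi{\'c} conjecture \eqref{Conj} in this case — I would first record the vertex and edge counts. The graph $K_{m\cdot 2^t}$ has $|v(\coeng{G})| = 2^t m$ vertices, and its edge count was already computed in the proof of Theorem~\ref{Energy-D2m} as $|e(\coeng{G})| = \tfrac{2^t m (2^t m - 2^t)}{2} = \tfrac{2^{2t} m (m-1)}{2}$. I would then compute the two ratios separately. Dividing, $\frac{M_1(\coeng{G})}{|v(\coeng{G})|} = \frac{2^{3t} m (m-1)^2}{2^t m} = 2^{2t}(m-1)^2$, and $\frac{M_2(\coeng{G})}{|e(\coeng{G})|} = \frac{2^{4t-1} m (m-1)^3}{2^{2t-1} m (m-1)} = 2^{2t}(m-1)^2$. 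Both ratios coincide, establishing the equality and hence \eqref{Conj} (with equality).

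There is really no serious obstacle here: the entire statement is a mechanical consequence of the realization theorem together with the closed-form Lemma~\ref{ZI-lem}. The only place demanding even minor care is the arithmetic in simplifying $\frac{M_2}{|e|}$, where the cancellation of the factors $m$ and $(m-1)$ and the bookkeeping of the powers of $2$ must be tracked correctly; in particular one should note $4t-1 - (2t-1) = 2t$ for the exponent of $2$ and $(m-1)^3/(m-1) = (m-1)^2$ for the polynomial part. Since both ratios reduce to the same quantity $2^{2t}(m-1)^2$, the conjectured inequality holds as an equality, which is the strongest possible outcome and requires no further case analysis.
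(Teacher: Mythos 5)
Your proposal is correct and follows exactly the same route as the paper: apply Theorem~\ref{dihed} to identify $\coeng{G}$ with $K_{m\cdot 2^t}$, substitute $a=m$, $b=2^t$ into Lemma~\ref{ZI-lem}, and then divide by $|v(\coeng{G})|=2^tm$ and $|e(\coeng{G})|=2^{2t-1}m(m-1)$ to see that both ratios equal $2^{2t}(m-1)^2$. The arithmetic checks out, so there is nothing to add.
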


\begin{proof}
By Theorem \ref{dihed}	we have $\coeng(G) \cong K_{m\cdot 2^t}$. Therefore, by Lemma \ref{ZI-lem}, we get the expressions for  $M_1(\coeng(G))$ and $M_2(\coeng(G))$.
%\[
%M_1(\coeng(G)) = 2^{3t}m(m - 1)^2 .
%\]	
%and
%\[
%M_2(\coeng(G)) = 2^{4t - 1}m(m - 1)^3 .
%\]
Here $|v(\coeng(G))| = 2^tm$ and  $|e(\coeng(G))| = 2^{2t - 1}m(m - 1)$. Therefore
\[
\frac{M_{1}(\coeng(G))}{\vert v(\coeng(G)) \vert} = \frac{2^{3t}m(m - 1)^{2}}{2^{t}m} = 2^{2t}(m - 1)^2 = \frac{2^{4t - 1}m(m - 1)^{3}}{2^{2t - 1}m(m - 1} = \frac{M_{2}(\coeng(G))}{\vert e(\coeng(G)) \vert}.
\]
%and  
%\[
%\frac{M_{2}(\coeng(G))}{\vert e(\coeng(G)) \vert} = \frac{2^{4t - 1}m(m - 1)^{3}}{2^{2t - 1}m(m - 1} = 2^{2t}(m - 1)^2 .
%\]
%Hence, the result follows.
\end{proof}
Similarly, using Theorem \ref{pq} and Lemma \ref{ZI-lem}, we get the following result.
\begin{thm}
If $G$ is isomorphic to   $F_{p, q}$, then 
\[
M_1(\coeng(G)) = q(q - 1)^2(p - 1)^3 \text{ and } M_2(\coeng(G)) = \frac{q(q - 1)^{3}(p - 1)^{4}}{2}
\]
Further, $\displaystyle{\frac{M_{2}(\coeng(G))}{\vert e(\coeng(G)) \vert} = (q - 1)^2(p - 1)^2 = \frac{M_{1}(\coeng(G))}{\vert v(\coeng(G)) \vert}}$.
\end{thm}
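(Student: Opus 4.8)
The plan is to follow the exact template established in the preceding Zagreb-index theorem for the dihedral and dicyclic cases, since by Theorem~\ref{pq} we already know $\coeng{F_{p,q}} \cong K_{q \cdot (p-1)}$. Thus the entire computation reduces to applying Lemma~\ref{ZI-lem} with the parameters $a = q$ and $b = p-1$, and then checking the Hansen--Vuki{\v{c}}evi{\'c} inequality \eqref{Conj} directly.

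First I would substitute $a = q$ and $b = p-1$ into the two formulas of Lemma~\ref{ZI-lem}. This immediately yields $M_1(\coeng{G}) = q(q-1)^2(p-1)^3$ from $a(a-1)^2 b^3$, and $M_2(\coeng{G}) = \frac{q(q-1)^3(p-1)^4}{2}$ from $\frac{a(a-1)^3 b^4}{2}$, matching the stated expressions. Next I would record the basic parameters of the complete multipartite graph $K_{q \cdot (p-1)}$: it has $|v(\coeng{G})| = q(p-1)$ vertices, and its edge count is $|e(\coeng{G})| = \frac{q(q-1)(p-1)^2}{2}$ (each of the $\binom{q}{2}$ pairs of parts contributing $(p-1)^2$ edges).

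Then I would form the two ratios in the conjecture. For the vertex ratio, $\frac{M_1(\coeng{G})}{|v(\coeng{G})|} = \frac{q(q-1)^2(p-1)^3}{q(p-1)} = (q-1)^2(p-1)^2$. For the edge ratio, $\frac{M_2(\coeng{G})}{|e(\coeng{G})|} = \frac{q(q-1)^3(p-1)^4/2}{q(q-1)(p-1)^2/2} = (q-1)^2(p-1)^2$. Since the two ratios are equal, inequality \eqref{Conj} holds with equality, which confirms that $\coeng{G}$ satisfies the Hansen--Vuki{\v{c}}evi{\'c} conjecture.

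I do not anticipate any genuine obstacle here: the result is entirely mechanical once the graph is identified as a balanced complete multipartite graph. The only point requiring a little care is the arithmetic cancellation in the edge ratio, where both the factor of $q$ and the factor of $\frac{1}{2}$ must cancel cleanly against $|e(\coeng{G})|$; so I would double-check the edge-count formula against the general identity $|e(K_{a\cdot b})| = \binom{a}{2}b^2 = \frac{a(a-1)b^2}{2}$ with $a=q$, $b=p-1$, to be sure the equality of the two ratios is exact rather than merely an inequality.
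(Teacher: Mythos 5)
Your proposal is correct and follows essentially the same route as the paper: invoke Theorem~\ref{pq} to identify $\coeng{F_{p,q}}$ with $K_{q\cdot(p-1)}$, apply Lemma~\ref{ZI-lem} with $a=q$ and $b=p-1$, and verify that both ratios equal $(q-1)^2(p-1)^2$ using $|v(\coeng{G})|=q(p-1)$ and $|e(\coeng{G})|=\frac{q(q-1)(p-1)^2}{2}$. No gaps; the arithmetic checks out exactly as in the paper's proof.
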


\section{Open problems}

\begin{prob}
The arc sets of the $k$-Engel digraphs form an increasing sequence under
inclusion. In a finite group, the union of the arc sets of the $k$-Engel
digraphs forms the Engel digraph studied in this paper. Similarly, the edge
sets of the $k$-nilpotency graphs form an increasing sequence. The
undirected $k$-Engel graph contains the $k$-nilpotency graph.

Study the relationship between these graphs and digraphs.
\end{prob}

\begin{prob}
What can be said about the set of positive integers $n$ for which there are
groups of order $n$ with isomorphic Engel graphs but non-isomorphic Engel
digraphs? We conjecture that this set contains $2p^3$ for every odd prime $p$.
\end{prob}

\begin{prob}
Describe the classes of finite groups which are one-sided or two-sided Engel
tame, by excluded subgroups or otherwise.
\end{prob}

\begin{prob}
Study the strong Engel graph. As we observed in Section~\ref{s:NFE},
a group is nilpotent if and only if its Engel graph and strong Engel graph are
equal, and is two-sided Engel tame if and only if its strong Engel graph is
equal to its nilpotency graph. If these conditions are not satisfied, then we
have two ``difference graphs'' to study.
\end{prob}

The characterization of finite non-Engel groups such that $\coeng(G)$ is planar is given in Theorem \ref{Ab-Plannar}. In Section \ref{S:Genus}, we characterized certain finite non-Engel groups such that $\coeng(G)$ is toroidal, double-toroidal or triple-toroidal. In this regard, we pose the following problem.
\begin{prob}
Characterize finite  non-Engel groups such that $\coeng(G)$ is  toroidal, double-toroidal or triple-toroidal.   
\end{prob}
In Section \ref{Energy}, it was observed that $\coeng(G)$ is neither hyperenergetic nor hypoenergetic and it satisfies E-LE conjecture if $G = D_{2m}, D_{2^{t+1}m}, Q_{2^{t+1}m}$ and $F_{p, q}$, where $t \geq 1$, $m \geq 3$ is odd and $p, q$ are primes. Further, $\coeng(G)$ satisfies Hansen-Vuki{\v{c}}evi{\'c} conjecture for these groups. In this regard we pose the following problem.
\begin{prob}
Determine whether (a) $\coeng(G)$  satisfies E-LE conjecture (b) $\coeng(G)$ is neither hyperenergetic nor hypoenergetic        
(c) $\coeng(G)$  satisfies Hansen-Vuki{\v{c}}evi{\'c} conjecture. 
\end{prob}

\end{document}